\title{Extended Circular Nim}
\author{Koki Suetsugu\footnote{Toyo University, Waseda University, and Osaka Metropolitan University, suetsugu.koki@gmail.com}}
\date{}
\newtheorem{theorem}{Theorem}
\newtheorem{definition}{Definition}
\newtheorem{example}{Example}
\newtheorem{corollary}{Corollary}
\newtheorem{observation}{Observation}
\begin{document}

\maketitle
\begin{abstract}
{\sc Circular nim} ${\rm CN}(m, k)$ is a variant of {\sc nim}, in which there are $m$ piles of tokens arranged in a circle and each player, in their turn, chooses at most $k$ consecutive piles in the circle and removes an arbitrary number of tokens from each pile. The player must remove at least one token in total.
For some cases of $m$ and $k$, closed formulas to determine which player has a winning strategy have been found. Almost all cases are still open problems.
In this paper, we consider a variant of {\sc circular nim,} {\sc extended circular nim}. In {\sc extended circular nim} ${\rm ECN}(m_S, k)$, there are $m$ piles of tokens arranged in a circle. $S$ is a set of positive integers less than or equal to half of $m$.
%Each player, in their turn, chooses at most $k$ piles selected every $s$-th pile in a circle for an $s \in S$ and removes an arbitrary number of tokens from each pile.  
In each turn, a player chooses an integer $s \in S$.
Then the player selects at most $k$ piles among those located every $s$-th position on the circle,
and removes an arbitrary number of tokens from each selected pile.
We show some closed formulas to determine which player has a winning strategy for the cases where the number of piles is no more than eight, and for a few generalized cases.
\end{abstract}

\section{Introduction}
In this paper, we consider a generalization of a ruleset, {\sc circular nim}. In {\sc circular nim} ${\rm CN}(m, k)$, there are $m$ piles of tokens arranged in a circle. Each player, in their turn, chooses at most $k$ consecutive piles in the circle and removes an arbitrary nonnegative number of tokens from each pile. The player must remove at least one token in total. This game has been studied in some articles, and winning positions for the previous player are characterized for some cases of $(m, k)$. Figure \ref{fig:circularexample} shows an example of a move in {\sc circular nim} ${\rm CN}(6, 3)$.

\begin{figure}
\centering
    \begin{minipage}{0.44\linewidth}
        \centering
    \includegraphics[width=\linewidth]{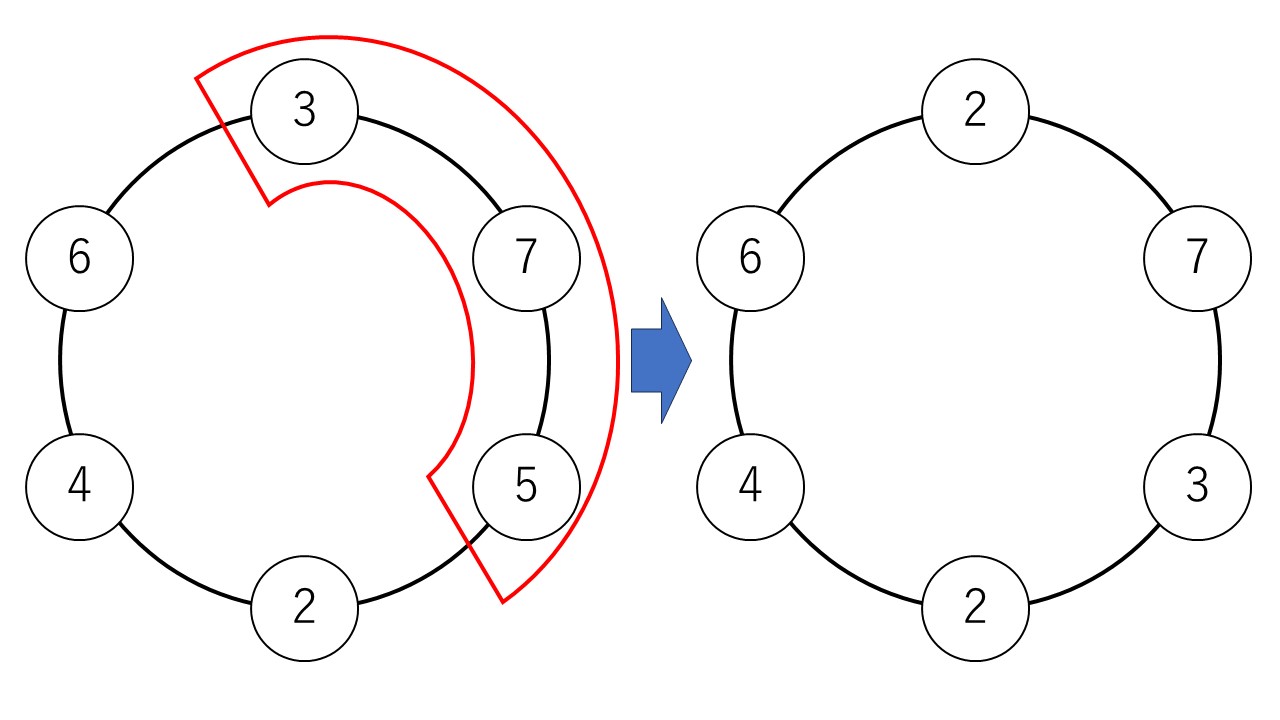}
    \caption{Move in {\sc circular nim} ${\rm CN}(6, 3)$.}
    \label{fig:circularexample}    
    \end{minipage}
    \begin{minipage}{0.1\linewidth}
    \end{minipage}
    \begin{minipage}{0.44\linewidth}
        \centering
    \includegraphics[width=\linewidth]{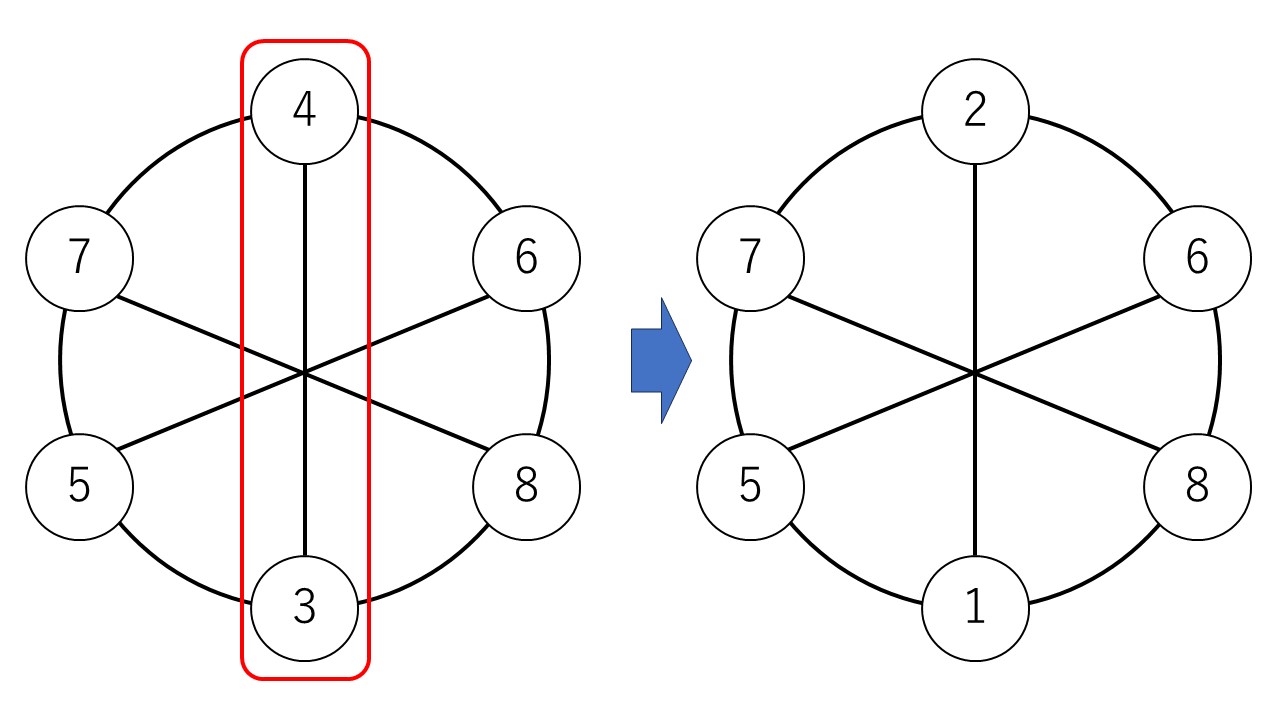}
    \caption{Move in {\sc extended circular nim} ${\rm ECN}(6_{\{1, 3\}}, 2)$.}
    \label{fig:ecircularexample}    
    \end{minipage}
\end{figure}

In this paper, we consider {\sc extended circular nim} ${\rm ECN}(m_S, k)$, a generalization of {\sc circular nim}. In ${\rm ECN}(m_S, k)$, there are $m$ piles of tokens arranged in a circle. $S$ is a set of positive integers less than or equal to half of $m$.
Each player, in their turn, chooses at most $k$ piles selected every $s$-th pile in a circle for an $s \in S$.
That is, ${\rm ECN}(m_{\{1\}}, k)$ is the same as ${\rm CN}(m, k).$
Figure \ref{fig:ecircularexample} shows an example of a  move in {\sc extended circular nim} ${\rm ECN}(6_{\{1, 3\}}, 2)$.

\subsection{Combinatorial game theory}
In order to study games such as {\sc extended circular nim}, we need to introduce some background of combinatorial game theory (CGT).
Combinatorial game theory studies two-player perfect information games with no chance. In this paper, we use some theories of {\em impartial rulesets}, that is, in every position, both players have the same sets of options. We also assume that the games are {\em short}, that is, the game must end in a finite number of moves, and under {\em normal play convention}, that is, the player who cannot move is the loser.

With these assumptions, the player who has a winning strategy is uniquely determined for each position.
A position in which the next player has a winning strategy is called an $\mathcal{N}$-position, and a position in which the previous player has a winning strategy is called a $\mathcal{P}$-position.

It is well-known that if all positions of an impartial ruleset are separated into two sets $P$ and $N$, which satisfy the following properties, then $P$ and $N$ are the sets of all $\mathcal{P}$-positions and all $\mathcal{N}$-positions, respectively:  

\begin{itemize}
\item All terminal positions are in $P$.
\item For every position in $P$, all options of the position are in $N$.
\item For every position in $N$, at least one option of the position is in $P$.
\end{itemize}

We will use this fact for some proofs. For the details of combinatorial game theory, see \cite{Sie13}.

%これまで様々なルールの不偏ゲームに対して，どちらのプレイヤに必勝戦略があるかを，盤面の情報から簡潔に判定する方法が調べられてきたが，ルールに応じて実にさまざまな形で必勝プレイヤを表す判定法があることが知られており，また，簡潔な判定法がそもそも存在するかどうかわからない不偏ゲームも多い．
%似たようなルールであっても必勝プレイヤを表す判定法が非常に異なる場合も多くあり，ルールと必勝プレイヤの間にどのような関係があるのかは謎に包まれている．

%本研究で扱う円形ニムも，同じようなルールでありながら，少しルールのパラメータを変えるだけで必勝判定の方法が大きく変化することが知られており，さらにパラメータによっては必勝判定の方法が分かっていないものも多い．
%そこで本研究では，円形ニムを拡張した拡張円形ニムを定義し、そこで必勝条件がどのように変化するかを調べた．
%なお，組合せゲーム理論や不偏ゲームに関する詳細については\cite{FS14, ANW11, Sie13}などをご参照いただきたい．

\subsection{Some impartial rulesets}
\subsubsection{{\sc Nim} and {\sc Moore's nim}}
In early studies, the characterization of $\mathcal{P}$-positions is considered in a simple formula.

As examples of rulesets in which the sets of $\mathcal{P}$-positions are characterized, the following two rulesets, {\sc nim} and {\sc Moore's nim} are well-known.

\begin{definition}
In {\sc nim}, there are some piles of tokens.
The player, in their turn, chooses one pile and removes an arbitrary positive number of tokens from the pile.
\end{definition}

Note that since we assume that the winner is determined by normal play convention, the player who removes the last token is the winner.

A position in {\sc nim} is denoted as $(n_0, n_1, \ldots, n_{m-1})$ when there are $m$ piles and each pile has $n_0, n_1, \ldots, n_{m-1}$ tokens. 
Let $\oplus$ be the exclusive OR operator for binary notation.
By using the following theorem, we can easily determine which player has a winning strategy in {\sc nim}.

\begin{theorem}[\cite{Bou01}]
\label{thm_nim}
A position $(n_0, n_1, \ldots, n_{m-1})$ in {\sc nim} is a $\mathcal{P}$-position if and only if $n_0\oplus n_1 \oplus \cdots \oplus n_{m-1} = 0$.
\end{theorem}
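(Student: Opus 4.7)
The plan is to verify Bouton's characterization by exhibiting the two sets explicitly and checking the three bullet-point conditions stated just above in the excerpt. Let $P = \{(n_0,\dots,n_{m-1}) : n_0\oplus n_1 \oplus\cdots\oplus n_{m-1}=0\}$ and let $N$ be its complement among all Nim positions. It suffices to prove that $(P,N)$ satisfies the three axioms for $(\mathcal{P}, \mathcal{N})$-positions, because that uniquely determines these sets.

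First I would dispatch the terminal case: the unique terminal position $(0,0,\ldots,0)$ has XOR $0$, hence lies in $P$. Next I would handle the closure direction: from a position in $P$, any legal move alters the contents of exactly one pile, changing $n_i$ to some $n_i' < n_i$. The XOR of the new tuple equals $n_i \oplus n_i'$, which is nonzero since $n_i \neq n_i'$, so the resulting position lies in $N$. This uses only that a Nim move touches one pile and strictly decreases it.

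The substantive step is to show that every $N$-position has an option in $P$. Given $(n_0,\ldots,n_{m-1})$ with nonzero XOR $x$, let $b$ be the index of the highest bit set in $x$. Because $x$ has a $1$ in bit $b$, an odd number of the $n_i$ have a $1$ in bit $b$; pick any such $n_i$ and set $n_i' = n_i \oplus x$. Flipping bit $b$ from $1$ to $0$ (and possibly altering lower bits) guarantees $n_i' < n_i$, so reducing pile $i$ to $n_i'$ tokens is a legal Nim move. The new XOR is $x \oplus n_i \oplus n_i' = x \oplus x = 0$, placing the resulting position in $P$.

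The only mildly delicate point is justifying $n_i' < n_i$ from the definition of $b$; I would state this as a short lemma about binary expansions, noting that the highest differing bit of $n_i$ and $n_i'$ is $b$ and that $n_i$ has a $1$ there. Once the three axioms are verified, the inductive characterization stated earlier in the paper immediately identifies $P$ with the $\mathcal{P}$-positions, completing the proof.
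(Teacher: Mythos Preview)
Your proof is correct and is precisely the classical argument due to Bouton. Note, however, that the paper does not actually supply a proof of this theorem: it is stated with a citation to \cite{Bou01} and no \texttt{proof} environment follows. So there is no ``paper's own proof'' to compare against; you have filled in what the paper leaves as a reference. Your argument is exactly the standard one and fits the paper's framework, since it verifies the three bullet-point conditions for $(P,N)$ that the paper sets up just before the theorem.
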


{\sc Moore's nim} is a generalization of {\sc nim}.

\begin{definition}
In {\sc Moore's nim} ${\rm MN}(m, k)$, there are $m$ piles of tokens. Each player, in their turn, chooses at most $k$ piles and removes an arbitrary number of tokens from each pile. The player must remove at least one token in total.
\end{definition}

When $k=1,$ {\sc Moore's nim} is the original {\sc nim}. Thus, {\sc Moore's nim} is a generalization of {\sc nim}. The $\mathcal{P}$-positions of this ruleset are characterized by the following theorem, which generalizes Theorem \ref{thm_nim}.

\begin{theorem}
[\cite{Moo10}]
\label{thm_moo}
A position $(n_0, n_1, \ldots, n_{m-1})$ in {\sc Moore's nim} ${\rm MN}(m, k)$ is a $\mathcal{P}$-position if and
only if when each $n_i$ is expanded in the binary form, the summand of each digit contains a multiple of $(k + 1)$ number of $1$s.
\end{theorem}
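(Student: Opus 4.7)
The plan is to apply the general framework from the excerpt: let $P$ be the set of positions where every binary column sum is a multiple of $k+1$, $N$ its complement, and verify the three defining properties of $\mathcal{P}$- and $\mathcal{N}$-positions. The terminal position (all piles empty) is in $P$ trivially, since every column sum is $0$.

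For the direction ``every move from $P$ lands in $N$'', the argument is as follows. A legal move strictly decreases between $1$ and $k$ piles. In each such pile the highest bit that changes must flip from $1$ to $0$. Let $b$ be the maximum such bit over all modified piles. At column $b$, exactly the piles whose top-changed bit equals $b$ flip their bit $b$ from $1$ to $0$, and such piles number at least one and at most $k$. Hence column $b$'s sum decreases by some integer in $\{1, \ldots, k\}$, which is not $\equiv 0 \pmod{k+1}$; the new position is in $N$.

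The harder direction is ``from any position in $N$ some move reaches $P$''. The proof is constructive. Let $b$ be the highest bit with $c_b \not\equiv 0 \pmod{k+1}$ and set $r = c_b \bmod (k+1) \in \{1, \ldots, k\}$. I would first commit $r$ piles having bit $b$ equal to $1$ (possible because $c_b \ge r$), intending to flip their bit $b$ from $1$ to $0$; this makes column $b$'s new sum divisible by $k+1$. Then I process the bits $j = b-1, b-2, \ldots, 0$ from high to low. At each such $j$, an already-committed pile has full freedom at bit $j$ (either $0$ or $1$), while an uncommitted pile with old bit $j$ equal to $1$ may be newly committed by flipping that bit to $0$ (a strict decrease). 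These choices are made so that the new column $j$ sum is a multiple of $k+1$.

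The main obstacle is bounding the total number of committed piles by $k$. Letting $R_j$ denote the committed count entering bit $j$ and $r'_j$ the residue modulo $k+1$ of the number of uncommitted piles with bit $j$ equal to $1$, a short case analysis shows that new commitments are needed at bit $j$ only when $r'_j \ge 1$ and $R_j < k+1 - r'_j$, and in that case exactly $r'_j$ new commitments suffice; in particular the updated count $R_j + r'_j$ stays at most $k$. A straightforward induction on the bit index then shows that the running commitment total never exceeds $k$, and since at least $r \ge 1$ pile is modified, the construction yields a legal move to a position in $P$.
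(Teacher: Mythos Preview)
The paper does not actually prove this theorem: it is stated with a citation to Moore's 1910 paper and no argument is given. So there is no ``paper's own proof'' to compare against.

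Your proof is correct and is essentially the classical argument for Moore's theorem. The $P\to N$ direction via the maximal top-changed bit $b$ is clean and complete. For the $N\to P$ direction, your greedy construction with the running commitment count $R_j$ is the standard approach; the key inequality $R_j + r'_j < (k+1-r'_j) + r'_j = k+1$ in the only case requiring new commitments is exactly what keeps the move legal. Two small points you might make explicit for a fully self-contained write-up: (i) in the case $r'_j \ge 1$ and $R_j \ge k+1-r'_j$ you are setting exactly $k+1-r'_j$ of the previously committed piles to have bit $j$ equal to $1$, so the column sum becomes $(k+1-r'_j)+u_j \equiv 0 \pmod{k+1}$; and (ii) all bits above $b$ are automatically left unchanged because every commitment occurs at a bit $\le b$ and, by construction, a pile committed at bit $j$ retains its original bits above $j$. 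Both are implicit in your description but worth stating once.
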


Note that when $k = m - 1$, a position $(n_0, n_1, \ldots, n_{m-1})$ in {\sc Moore's nim} ${\rm MN}(m, m-1)$ is a $\mathcal{P}$-position   if and only if $n_0 = n_1 = \cdots = n_{m-1}$.
\subsubsection{Simplicial nim}

{\sc Simplicial nim} is a generalization of {\sc nim} and {\sc Moore's nim}.

Let $V$ be a finite set of vertices.
A {\em simplicial complex} $\Delta$ on $V$ is a subset of $2^{V}$
 which satisfies the following:  
 \begin{itemize}
     \item For every element $v \in V,$ $\{v\} \in \Delta$
     \item If $F \in \Delta$ and $G \subseteq F,$ then $G \in \Delta$.
 \end{itemize}
 An element of $\Delta$ is termed a {\em face} of $\Delta$.
Ehrenborg and Steingr\'{i}msson studied {\sc nim} on simplicial complexes or {\sc simplicial nim} in \cite{ES96}. Let $\Delta$ be a simplicial complex on a finite set $V$. Each vertex has several tokens. In {\sc nim} on $\Delta$, the players, in turn, choose a non-empty face $F$ in $\Delta$ and arbitrarily remove any positive number of tokens from all vertices in $F$. The original {\sc nim} and {\sc moore's nim}
%, and {\sc circular nim} \cite{DS13}
can be considered as special cases of this game.
Ehrenborg and Steingr\'{i}msson did not characterize the $\mathcal{P}$-positions of {\sc nim} on $\Delta$ without restrictions but discovered good constructions and characterized $\mathcal{P}$-positions for certain cases. For example, they proved that circuits, which are subsets of the vertex set and not faces of the simplicial complex, but all proper subsets of them are faces, can be used for characterizing $\mathcal{P}$-positions.
 Horrocks \cite{Hor10} and Penn \cite{Pen21} also studied this ruleset. {\sc Hypergraph nim}, studied in \cite{BGHMM19a, BGHMM19b, BGHMM23}, is known as a generalization of {\sc simplicial nim}.

For nonnegative integers $n_0, n_1, \ldots, n_{m-1},$ a position $(n_0, n_1, \ldots, n_{m-1})$ of {\sc nim} on $\Delta$ is a position such that there are $n_i$ tokens on $v_i$ for each $i$.

\begin{example}
Let $V = (v_0, v_1, v_2)$ and $\Delta = \{ \{v_0\}, \{v_1\}, \{v_2\}, \{v_0, v_1\}\}.$
Consider the position $A = (3, 4, 5)$ of {\sc nim} on $\Delta$.
In this case, the position $B = (1, 2, 5)$ is an option of $A$ since $\{v_0, v_1\}$ is a face of $\Delta$ but the position $C = (3, 3, 3)$ is not an option of $A$ since $\{v_1, v_2\}$ is not a face of $\Delta$. Figure \ref{fig:simlicialexample} shows these legal and illegal moves.
\end{example}

\begin{figure}[tb]
\centering
\includegraphics[width = 10cm]{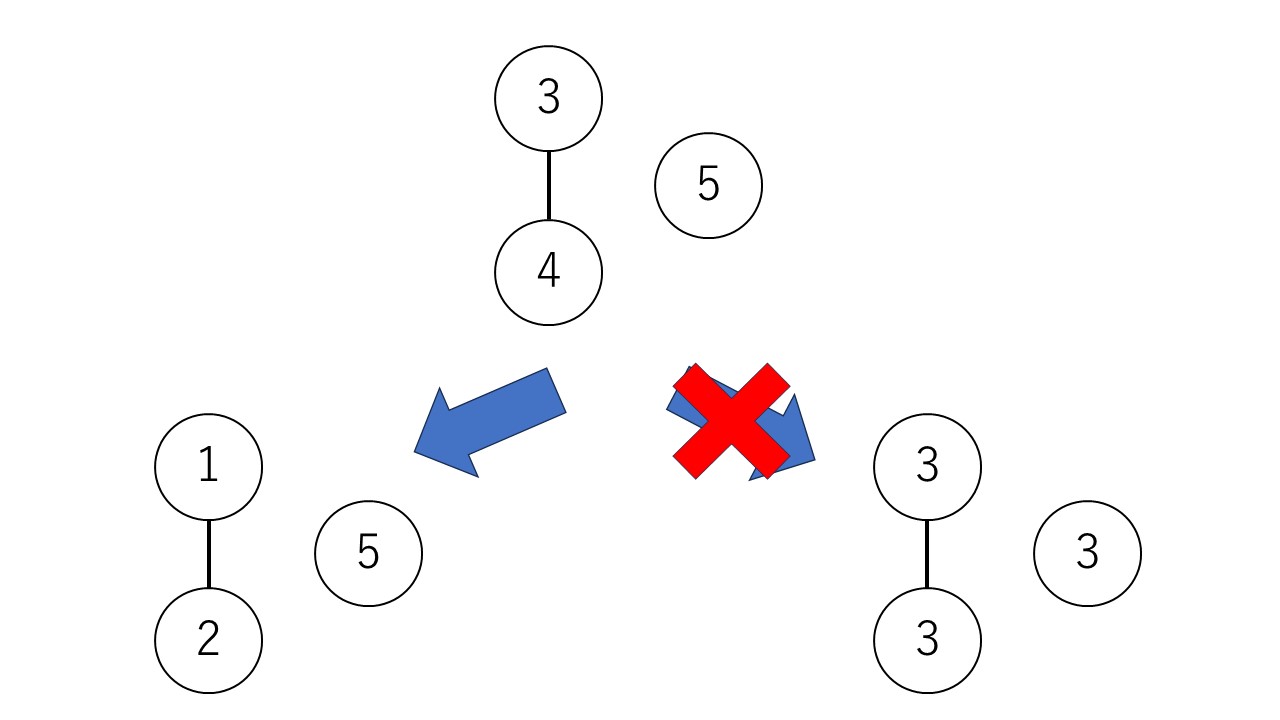}
\caption{A legal move and an illegal move in {\sc simplicial nim}.}
\label{fig:simlicialexample}
\end{figure}

%EhrenborgとSteigrimssonの結果の中から、あとの議論に用いる以下の定理を引用する：
For the later discussion, we refer to the following Definitions \ref{def:vector}, \ref{def:nimregular} and Theorem \ref{thm:nimregular} from \cite{ES96}.

\begin{definition}
\label{def:vector}
    Given a complex $\Delta$ with vertex set $V$, let $\mathbb{N}^V$ be the set of vectors indexed by $V$ whose entries are non-negative integers. Let ${\bf e}(v)$ be the $v$-th unit vector, that is, the vector whose $v$-th entry is $1$ and all other entries are $0$. For a subset $A$ of $V$, let 
    $$
    {\bf e}(A) = \sum_{v \in A} {\bf e}(v).
    $$
\end{definition}

Note that using this notation, a position $G = (n_0, \ldots, n_{m-1})$ can be written as 
$$
G = \sum_{0 \le i < |V|} n_i \cdot {\bf e}(v_i).
$$

\begin{definition}
\label{def:nimregular}
    Let $\Delta$ be a simplicial complex and $\mathcal{B}$ a collection of subsets of $\Delta$ such that the following Conditions (A), (B), and (C) are satisfied. Here, $A = B \sqcup C$ means $A = B \cup C$ and $B \cap C = \emptyset.$

        \begin{itemize}
        \item[(A)] The empty set belongs to $\mathcal{B}$.
        
        \item[(B)] Suppose that $F$ is a face of $\Delta$, that $B, B' \in \mathcal{B}$ and that $B' = F \sqcup B$. Then $F$ is the empty face.
        
        \item[(C)] Let $F$ be a face of $\Delta$ and let $S$ be a subset of $V$. Then there exist faces $K$ and $G$  of $\Delta$, with $K \subseteq F \subseteq G$, such that $G - F \subseteq S$ and $(S - G) \sqcup K \in \mathcal{B}$.
        \end{itemize}

    Then $\mathcal{B}$ is said to be a {\sc nim}-basis for $\Delta$. A simplicial complex which has a {\sc nim}-basis is said to be {\sc nim}-regular.
\end{definition}
\begin{theorem}
\label{thm:nimregular}
    Assume that $\Delta$ is a {\sc nim}-regular simplicial complex with {\sc nim}-basis $\mathcal{B}$. Then, a position $G$ is a $\mathcal{P}$-position on $\Delta$ if and only if 
    $$
    G = \sum_{i \ge 0}2^i \cdot {\bf e}(A_i),
    $$
    where $A_i$ belongs to $\mathcal{B}$ for all $i \ge 0$.
\end{theorem}

\subsubsection{{\sc Circular nim}}
Now we can define {\sc circular nim} as a special case of {\sc simplicial nim}.

\begin{definition}
Let $m$ and $k$ be positive integers with $k \leq m$.
{\sc Circular nim} ${\rm CN}(m, k)$ is {\sc nim} on a simplicial complex $\Delta$, where $V = \{v_0, \ldots, v_{m-1}\}$ and $\Delta$ is the set of all non-empty subset of  $\bigcup_{i = 0}^{m - 1}  \{v_i, v_{((i+1) \bmod m)}, \ldots, v_{((i+k-1) \bmod m)}\}$. 
%円形ニム，または${\rm CN}(m, k)$とは，いくつかの石でできた$m$個の山が円形に並んでおり，プレイヤは自分の手番で$k$個以内の隣接した山を選び，それぞれから好きなだけ石を取る．石がすべてなくなって打てなくなった方の負けである．
\end{definition}
That is, in {\sc circula nim}, 
there are $m$ piles of tokens arranged in a circle and each player, in their turn, chooses at most $k$ consecutive piles in the circle and removes an arbitrary number of tokens from each pile, where in total at least one token must be removed\footnote{Note that in {\sc circular nim} and {\sc extended circular nim} defined in the next section, a pile is considered to be still existing when counting $k$ consecutive piles even if all tokens on the pile have been removed. In contrast, in {\sc shrinking circular nim}, a variant of {\sc circular nim} discussed in \cite{OS24}, an empty pile is ignored.}.
When $k = 1$, ${\rm CN}(m,k)$ is the same ruleset as $m$-pile {\sc nim}. 
Also, when $k = m - 1$, ${\rm CN}(m,k)$ is the same ruleset as {\sc Moore's nim} ${\rm MN}(m, m-1)$.
Furthermore, when $k = m$, the player can select all of the piles; thus, we can consider this ruleset as a single-pile {\sc nim}. Therefore, these cases are trivial and other cases are studied in \cite{ES96, Duf96, Hor10, DH13, DHV22} as follows. For any $M = (n_0, \ldots, n_{m-1})$, we say $ M \in_\circlearrowleft P$ if there exists $i (< m)$ such that $(n_i, n_{((i + 1) \bmod m)}, \ldots, n_{((i + m - 1) \bmod m)}) \in P$ or $(n_i, n_{((i + m - 1) \bmod m)}, \ldots, n_{((i + 1) \bmod m)}) \in P$. That is, if a position in $P$ is equal to $M$, or it is the same position as $M$ after rotating or flipping it, then we say $ M \in_\circlearrowleft P$.  

\begin{itemize} 
\item Consider a position $M = (n_0,n_1,n_2,n_3)$ in ${\rm CN}(4,2)$. $M$ is a $\mathcal{P}$-position if and only if $n_0 = n_2$ and $n_1 = n_3$.

\item Consider a position $M = (n_0, n_1, n_2, n_3, n_4)$ in ${\rm CN}(5,2)$. 
$M$ is a $\mathcal{P}$-position if and only if $M \in_\circlearrowleft P =  \{(n_0, n_1, n_2, n_3, n_4) \mid n_0 = \max(M), n_0 + n_1 = n_2 + n_3, n_1 = n_4\}$.

\item Consider a position $M = (n_0, n_1, n_2, n_3, n_4)$ in ${\rm CN}(5,3)$. $M$
 is a $\mathcal{P}$-position if and only if $M \in_\circlearrowleft P = \{(n_0, n_1, n_2, n_3, n_4) \mid  n_0 = 0 , n_1 = n_2 + n_3= n_4\}.$
 
\item Consider a position $M = (n_0, n_1, n_2, n_3, n_4, n_5)$ in ${\rm CN}(6,3)$. $M$ is a $\mathcal{P}$-position if and only if  $M \in_\circlearrowleft P = \{(n_0, n_1, n_2, n_3, n_4, n_5) \mid n_0 + n_1  = n_3 + n_4, n_1 +n_2 = n_4 + n_5\}$.

\item Consider a position $M = (n_0, n_1, n_2, n_3, n_4, n_5)$ in ${\rm CN}(6,4)$.  $M$ is a $\mathcal{P}$-position if and only if $M \in_\circlearrowleft P = \{(n_0, n_1, n_2, n_3, n_4, n_5) \mid n_0 = \min(M), n_0 + n_1  = n_3 + n_4, n_1 + n_2 = n_4 + n_5, n_0 \oplus n_2 \oplus n_4 =~0 \}$.

\item Consider a position $M = (n_0,n_1,n_2,n_3,n_4, n_5,n_6)$ in ${\rm CN}(7,4)$. $M$ is a $\mathcal{P}$-position if and only if $M \in_\circlearrowleft P = P_1 \cup P_2 \cup P_3 \cup P_4$, where 

\begin{eqnarray*}
    P_1 &=& \{(n_0, n_1, n_2, n_3, n_4, n_5, n_6) \mid   n_0 = n_1 = 0, 
    n_3+n_4+n_5 = n_2 = n_6 > 0\},\\
    P_2 &=& \{(n_0, n_1, n_2, n_3, n_4, n_5, n_6) \mid n_0=n_1=n_2=n_3=n_4=n_5=n_6\},\\
    P_3 &=& \{(n_0, n_1, n_2, n_3, n_4, n_5, n_6) \mid n_0 = n_1, n_2 = n_6, n_3 = n_5, n_0 + n_2 = n_3 + n_4, \\ & & 0 < n_0 < n_4, n_0 = \min(M)\},\\
    P_4 &=& \{(n_0, n_1, n_2, n_3, n_4, n_5, n_6) \mid    n_0 = n_5, n_1 + n_2=n_3+n_4 = n_6 + n_0,\\
     & &n_0 = \min(M), n_0 < \min(\{n_1, n_4\}), n_0 < \max(\{n_2, n_3\})\}.
\end{eqnarray*}

\item Consider a position $M = (n_0,n_1,n_2,n_3,n_4, n_5,n_6,n_7)$ in ${\rm CN}(8,6)$. $M$ is a $\mathcal{P}$-position if and only if $M \in_\circlearrowleft P = \{(n_0,n_1,n_2,n_3,n_4, n_5,n_6,n_7)  \mid  n_0 = 0, n_1 = n_2 + n_3 = n_5+n_6 = n_7 , n_4 = \min(\{n_1, n_2+n_6\})\}$.
\end{itemize}

For the other cases of {\sc circuar nim}, it is still an open problem to find closed formulas for determining the set of $\mathcal{P}$-positions.

As shown here, in {\sc circular nim}, the formulas for $\mathcal{P}$-positions are completely different despite the rules are similar. This is the reason why almost all cases are still unsolved problems. 

In this paper, we aim to reexamine {\sc circular nim} from a broader perspective by extending its rules and explore the relationship between the rules and the formula that determines $\mathcal{P}$-positions.

\subsection{Sums of positions}
We also use the following two types of sums of positions.
\subsubsection{Disjunctive sum}
\begin{definition}
     Let $G$ and $H$ be positions of impartial rulesets. The {\em disjunctive sum} of $G$ and $H$, which is denoted as $G + H$, is a position such that the set of its all options is
    $$
    \{G' + H, G + H' \mid G' \text{ is an option of }G, H' \text{ is an option of }H\}.
    $$
\end{definition}
That is, in the disjunctive sum of positions, the current player chooses exactly one component and makes a legal move on the selected component. One can determine which player has a winning strategy by using Sprague-Grundy values.
\begin{definition}
Let $G$ be a position of an impartial ruleset. The Sprague-Grundy value of $G$ is denoted by $\mathcal{G}(G)$ and 

$$
\mathcal{G}(G) = {\rm mex}(\{\mathcal{G}(G') \mid G' \text{is an option of } G \}).
$$

Here, ${\rm mex}(S) = \mathbb{N}_0 \setminus S$ and $\mathbb{N}_0$ is the set of all nonnegative integers.

\end{definition}
\begin{theorem}[\cite{Spr35, Gru39}]
The following holds: 
\begin{itemize}
    \item Let $G$ be a position of an impartial ruleset. Then $G$ is a $\mathcal{P}$-position if and only if $\mathcal{G}(G) = 0$.
    \item Let $G$ and $H$ be positions of impartial rulesets. Then, $\mathcal{G}(G+H) = \mathcal{G}(G) \oplus \mathcal{G}(H).$
\end{itemize}    
\end{theorem}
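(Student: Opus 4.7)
The plan is to establish both claims by structural induction along the game tree, which is well-founded by the shortness hypothesis, combined with the inductive $\mathcal{P}/\mathcal{N}$ characterization recalled in the introduction (terminal positions lie in $P$; a position all of whose options are in $N$ lies in $P$; a position with at least one option in $P$ lies in $N$).

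For the first bullet I would handle terminal positions first: they have no options, so $\mathcal{G}(G) = {\rm mex}(\emptyset) = 0$, and they sit in $P$ by definition. For the inductive step, suppose the claim holds for every option of $G$. If $\mathcal{G}(G) = 0$, then $0$ is missing from the Grundy values of the options, so by induction no option lies in $P$, whence $G \in P$. Conversely, if $\mathcal{G}(G) > 0$, then $0$ appears as $\mathcal{G}(G')$ for some option $G'$, and by induction $G' \in P$, so $G \in N$.

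For the second bullet, set $a = \mathcal{G}(G)$, $b = \mathcal{G}(H)$, and $c = a \oplus b$; I need to verify two properties of the options of $G+H$: (i) no option has Grundy value $c$; (ii) every $d$ with $0 \le d < c$ is the Grundy value of some option. Property (i) follows from the induction hypothesis $\mathcal{G}(G'+H) = \mathcal{G}(G') \oplus b$: if this equaled $a \oplus b$, then $\mathcal{G}(G') = a$, contradicting the mex definition of $\mathcal{G}(G)$; the case of options of the form $G+H'$ is symmetric.

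Property (ii) is the standard highest-differing-bit argument and is the main obstacle. Given $d < c$, let $k$ be the most significant bit where $d$ and $c$ disagree; since $d < c$ that bit is $1$ in $c$ and $0$ in $d$, and since $c = a \oplus b$ exactly one of $a, b$ carries a $1$ in bit $k$, say $a$ (the other case is symmetric). A short bitwise inspection shows $d \oplus b < a$, so by the mex definition there is an option $G'$ of $G$ with $\mathcal{G}(G') = d \oplus b$, and the induction hypothesis yields $\mathcal{G}(G' + H) = (d \oplus b) \oplus b = d$. This is the one place in the argument where a miswritten inequality could derail things; the remainder is routine induction.
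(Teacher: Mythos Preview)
Your argument is correct and is the standard textbook proof of the Sprague--Grundy theorem: well-founded induction over the game tree for both bullets, the mex characterization for the first, and the highest-differing-bit argument for the second. The one subtle step, showing $d \oplus b < a$ when bit $k$ is set in $a$ but not in $b$, is handled correctly.

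There is nothing to compare against, however, because the paper does not prove this theorem. It is stated with attribution to Sprague and Grundy and used as background; the authors simply cite \cite{Spr35, Gru39} and move on. So your proposal supplies a proof where the paper deliberately omits one in favor of a reference to the original literature.
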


For a position $(n_0, n_1, \ldots, n_{m-1})$ in {\sc nim} it is easy to confirm that its Sprague-Grundy value is $n_0 \oplus n_1 \oplus \cdots \oplus n_{m-1}$. On the other hand, for {\sc Moore's nim} ${\rm MN}(m, k)$, no closed formula for calculating the Sprague-Grundy value in the general case is known.

However, for ${\rm MN}(m, m - 1)$, a closed formula for calculating Sprague-Grundy values is shown  as follows.

\begin{theorem}[\cite{JM80}]
\label{thm:mooregrundy}
    Assume that $G = (n_0, n_1, \ldots, n_{m-1})$ is a position in ${\rm MN}(m, m-1)$ and $n_0 = \min(\{n_i \mid 0\le i \le m-1\})$. We also let $t_i$ be $i$-th triangular number, that is, $t_i = \frac{i(i+1)}{2}$. Then, the  Sprague-Grundy value of $G$ is 
    $$
    \left \{ \begin{array}{cc}
        t_x + ((n_0 - t_x - 1) \bmod (x + 1)) & (\text{If } t_x < n_0) \\
        n_0 + n_1 + \cdots + n_{m-1} & (\text{Otherwise}),
    \end{array} \right.
    $$
    where $x = (n_1 - n_0) + \cdots + (n_{m-1} - n_0)$.
\end{theorem}

\subsubsection{Selective sum}
\begin{definition}

    Let $G$ and $H$ be positions of impartial rulesets. The {\em selective sum} of $G$ and $H$ , which is denoted as $G \vee H$, is a position such that the set of its all options is
    $$
    \{G' \vee H, G \vee H', G' \vee H' \mid G' \text{ is an option of }G, H' \text{ is an option of }H\}.
    $$
    
\end{definition}
That is, in the selective sum of positions, the current player chooses at least one component and makes a legal move on each selected component.
\begin{theorem}[\cite{Smi66}]
\label{thm:selsum}
    Let $G$ and $H$ be positions of impartial rulesets. $G \vee H$ is a $\mathcal{P}$-position if and only if both $G$ and $H$ are $\mathcal{P}$-positions.
\end{theorem}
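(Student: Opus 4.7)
The plan is to apply directly the three-condition characterization of $\mathcal{P}$/$\mathcal{N}$-positions recalled in the combinatorial game theory subsection. I introduce the candidate partition on the class of selective sums by setting
\[
    P = \{G \vee H \mid G \text{ and } H \text{ are both } \mathcal{P}\text{-positions in their component rulesets}\},
\]
with $N$ the complement (every selective sum in which at least one component is an $\mathcal{N}$-position). If $P$ and $N$ verify the three bullet-point conditions, then $P$ is exactly the set of $\mathcal{P}$-positions of selective sums, which is the desired equivalence.

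First I would dispose of the terminal case: $G \vee H$ has no options precisely when neither $G$ nor $H$ has any option, that is, when both are terminal; terminal positions in their own rulesets are $\mathcal{P}$-positions, so such a $G \vee H$ lies in $P$. Next, for the closure condition from $P$, suppose $G, H$ are both $\mathcal{P}$-positions. Any option of $G \vee H$ takes one of the three forms $G' \vee H$, $G \vee H'$, or $G' \vee H'$, where $G'$ ranges over options of $G$ and $H'$ over options of $H$. Because $G$ is a $\mathcal{P}$-position, every such $G'$ must be an $\mathcal{N}$-position, and likewise for $H'$. Hence each option of $G \vee H$ has at least one $\mathcal{N}$-component and therefore belongs to $N$.

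For the third condition, assume $G \vee H \in N$, so without loss of generality $G$ is an $\mathcal{N}$-position; pick an option $G'$ of $G$ that is a $\mathcal{P}$-position. If $H$ is already a $\mathcal{P}$-position, then the single-component move $G' \vee H$ is a legal option of $G \vee H$ whose components are both $\mathcal{P}$-positions, landing in $P$. Otherwise $H$ is also an $\mathcal{N}$-position and has an option $H'$ that is a $\mathcal{P}$-position; then the simultaneous move $G' \vee H'$ is a legal option of $G \vee H$ by the definition of the selective sum, and it lies in $P$. In either case an option in $P$ has been exhibited.

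The main subtlety, and essentially the only one, is that selective sums admit the joint move $G' \vee H'$ in addition to the two single-component moves; this extra move is exactly what rescues the argument in the subcase where both components are $\mathcal{N}$-positions (a subcase in which the disjunctive sum would behave differently). Once this is observed, no genuine obstacle remains: the three conditions are verified in one short case analysis each, and no induction on game length needs to be made explicit because the characterization from the preliminaries already packages it.
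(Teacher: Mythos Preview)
Your argument is correct: the three-condition characterization from the preliminaries applies cleanly, and your case analysis covers all option types of the selective sum without gaps. Note, however, that the paper does not supply its own proof of this theorem---it is stated as a cited result from \cite{Smi66}---so there is no in-paper argument to compare against; your write-up simply fills in the standard elementary proof that the citation points to.
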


The rest of this paper is organized as follows: In Section \ref{sec_ecn}, we introduce the rule of {\sc extended circular nim} and show basic results. %In Sections \ref{sec_ecn6}, \ref{sec_ecn7}, and \ref{sec_ecn8}, we show results on {\sc extended circular nim} with six, seven, and eight piles, respectively, and we also show Theorems \ref{thm_2m2}, \ref{thm_ecn2m12m1}, and \ref{thm_2m2m2}, which can be used for general cases, in these sections, respectively.
In Sections \ref{sec_ecn6}, \ref{sec_ecn7}, and \ref{sec_ecn8}, we present results on {\sc extended circular nim} with six, seven, and eight piles, respectively. In these sections, we also prove Theorems \ref{thm_2m2}, \ref{thm_ecn2m12m1}, and \ref{thm_2m2m2}, which can be used in general cases.

%\section{Main results}
\section{{\sc Extended circular nim}}
\label{sec_ecn}

{\sc Extended circular nim} ${\rm ECN}(m_S, k)$ is defined as follows.

\begin{definition}
Let $m$ and $k$ be positive integers with $k \leq m$ and let $S$ be a set of positive integers that are not more than half of $m$.
{\sc Extended circular nim} ${\rm ECN}(m_S, k)$ is {\sc nim} on a simplicial complex $\Delta$, where $V = \{v_0, \ldots, v_{m-1}\}$ and $\Delta$ is the set of all non-empty subset of $\bigcup_{s \in S} \bigcup_{i = 0}^{m - 1}  \{v_{i}, v_{((i+s) \bmod m)}, \ldots, v_{((i + (k-1)s) \bmod m )}\}$. 
\end{definition}

That is, in {\sc extended circular nim} ${\rm ECN}(m_S, k)$, there are $m$ piles of tokens arranged in a circle. $S$ is a set of positive integers less than or equal to half of $m$.
Each player, in their turn, chooses at most $k$ piles selected every $s$-th pile in a circle for an $s \in S$.

Figure \ref{6123} shows piles of a position in ${\rm ECN}(6_{\{1,2\}},3)$. For example, selecting piles $B, C,$ and $D$ is legal because the player selects every first pile (connected black lines), also, selecting piles $A, C$, and $E$ is legal because the player selects every second pile (connected green lines). However, selecting piles $A, B$, and $D$ is illegal in this ruleset.
\begin{figure}[tb]
\centering
\includegraphics[width = 10cm]{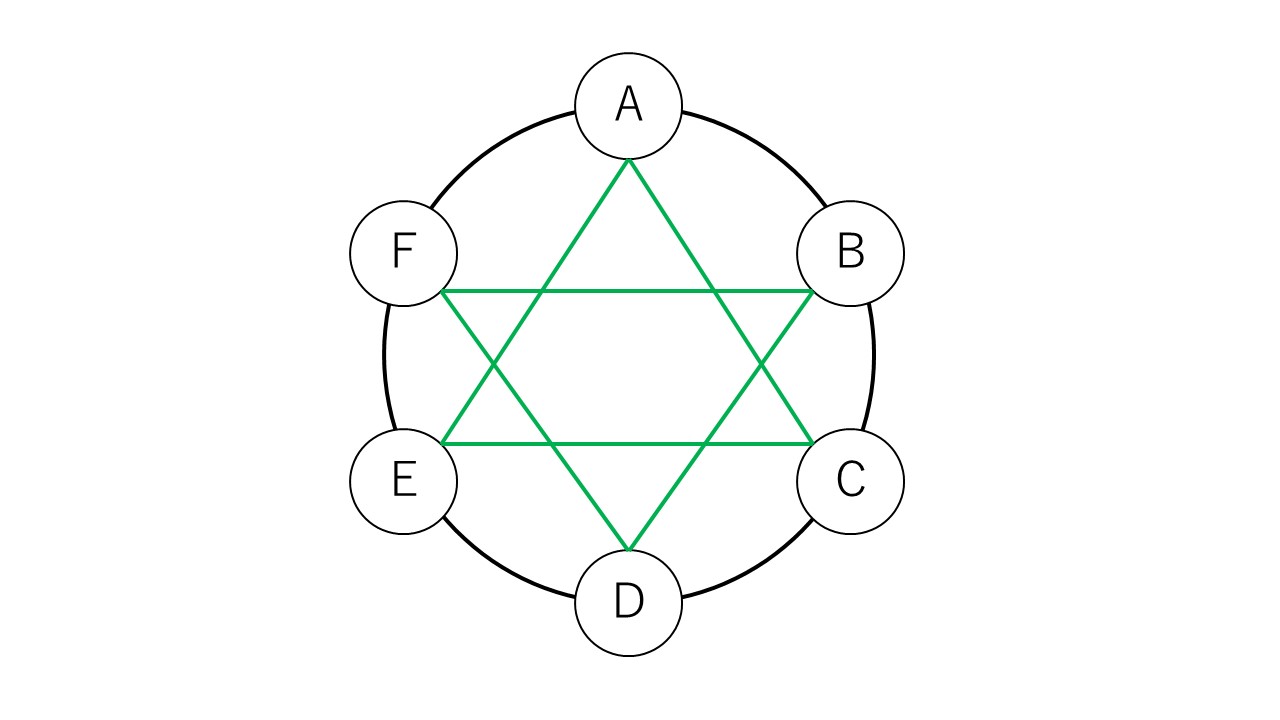}
\caption{${\rm ECN}(6_{\{1,2\}}, 3)$.}
\label{6123}
\end{figure}

\begin{definition}
    Let $\Gamma_1$ and $\Gamma_2$ be rulesets. $\Gamma_1$ and $\Gamma_2$ are {\em isomorphic} if there is a bijection map $f$ from the set of all positions in $\Gamma_1$ to the set of all positions in $\Gamma_2$ such that if $g_1, g_2$ are positions in $\Gamma_1$ and $g_2$ is an option of $g_1,$ then $f(g_2)$ is an option of $f(g_1)$ and vice versa.
\end{definition}
Note that from the definition of {\sc circular nim} and {\sc extended circular nim}, ${\rm ECN}(m_{\{1\}}, k)$ and ${\rm CN}(m, k)$ are isomorphic.
%表\ref{matome}で${\rm ECN}(m_S, k)$の$m \leq 7$の場合について，これまでの円形ニムの研究および本研究で紹介する結果，未解決となっている部分についてまとめた．

\begin{theorem}
\label{gcnas}
Let $a$ be an integer which satisfies $a \geq 4$.

When an integer $b \in S$ is prime to $a$, then ${\rm ECN}(a_S, a - 1)$ is isomorphic to ${\rm MN}(a, a-1)$. Otherwise, it is isomorphic to ${\rm ECN}(a_S, a-2)$.

When an integer $b \in S$ is prime to $a$, then ${\rm ECN}(a_S, a)$ is isomorphic to ${\rm MN}(a, a)$; that is, this can be regarded as a one-pile {\sc nim}. Otherwise, it is isomorphic to ${\rm ECN}(a_S, a-2)$.
\end{theorem}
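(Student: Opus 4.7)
The plan is to prove both parts by directly comparing the underlying simplicial complexes $\Delta \subseteq 2^V$ with $V = \{v_0, \ldots, v_{a-1}\}$ and showing that they coincide with those of ${\rm MN}(a, a-1)$, ${\rm MN}(a, a)$, or ${\rm ECN}(a_S, a-2)$, as appropriate. The structural observation driving everything is that for each $s$, the map $v_i \mapsto v_{(i+s) \bmod a}$ partitions $V$ into $\gcd(s, a)$ orbits of length $a/\gcd(s, a)$, and the sets $\{v_i, v_{i+s}, \ldots, v_{i+js}\}$ in the definition of $\Delta$ are initial segments of these orbits in the $s$-order. Once $k$ is at least the orbit length, the face generated by $(s, i)$ at the largest admissible $j$ is the whole orbit, and after taking the downward closure implicit in $\Delta$ being a simplicial complex, the contribution of $s$ to $\Delta$ is exactly the collection of non-empty subsets of the $s$-orbits.

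I would then record the elementary bound $a/d \leq a/2 \leq a - 2$, valid for $a \geq 4$ and $d \geq 2$. It implies that for every $s \in S$ with $\gcd(s, a) \geq 2$, each of the values $k = a - 2, a - 1, a$ already covers an entire $s$-orbit, so the contribution of $s$ to $\Delta$ is the same for all three choices of $k$. This immediately settles the ``otherwise'' clauses of both parts of the theorem: when no $b \in S$ is coprime to $a$, the complexes for $k = a - 2, a - 1, a$ are identical sets of faces, so the rulesets are identical and in particular isomorphic.

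For the coprime clauses, assume some $b \in S$ satisfies $\gcd(b, a) = 1$, so the $b$-orbit is all of $V$. When $k = a - 1$, the face $\{v_i, v_{i+b}, \ldots, v_{i+(a-2)b}\}$ is the complement of $\{v_{(i-b) \bmod a}\}$; as $i$ varies over $\{0, \ldots, a-1\}$, the omitted vertex runs through every singleton of $V$, so the downward closure of these faces is every subset of $V$ of size at most $a - 1$, which is precisely the complex of ${\rm MN}(a, a-1)$. When $k = a$, the analogous face at $j = a - 1$ equals $V$ itself, so $\Delta$ becomes the full power set of $V$, i.e.\ the complex of ${\rm MN}(a, a)$, and a single move can empty every pile, accounting for the reduction to one-pile {\sc nim}. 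Contributions from the remaining $s' \in S$ are subsets of the $s'$-orbits, hence subsets of $V$, hence already contained in the Moore-nim complex; so they do not alter $\Delta$.

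The main subtlety, in my view, is the implicit downward closure in the definition of $\Delta$, together with the boundary case $a = 4, d = 2$, where $a/d = a - 2$ holds with equality. For that case one checks by direct inspection that the $s$-orbit of length $2$ is fully traced out at $k = a - 2 = 2$, so the orbit-covering argument still applies; once that is noted, the rest of the proof is bookkeeping.
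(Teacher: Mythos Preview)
Your proof is correct and follows essentially the same approach as the paper's: both hinge on the observation that a coprime $b \in S$ lets one select any $a-1$ (resp.\ all $a$) piles, while if every $s \in S$ has $\gcd(s,a) \geq 2$ then the maximum face size is bounded by the orbit length $a/\gcd(s,a) \leq a/2 \leq a-2$, so raising $k$ from $a-2$ to $a-1$ or $a$ adds no new moves. Your version is more carefully stated (the orbit language, the downward-closure remark, the boundary check at $a=4$), but the underlying argument is identical to the paper's short proof.
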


\begin{proof}
Consider ${\rm ECN}(a_S, a - 1)$.
If $b \in S$ is prime to $a$, the player can choose arbitrary $a-1$ piles. Therefore, ${\rm ECN}(a_S, a - 1)$ is isomorphic to ${\rm MN}(a, a-1)$.
Otherwise, the player can choose at most $\frac{a}{2}$ piles. Since $\frac{a}{2} \leq a-2$, this ruleset is isomorphic to ${\rm ECN}(a_S, a-2)$.

Similarly, ${\rm ECN}(a_S, a)$ is isomorphic to ${\rm MN}(a, a)$ when an integer $b \in S$ is prime to $a$ and otherwise ${\rm ECN}(a_S, a)$ is isomorphic to ${\rm ECN}(a_S, a-2)$.
\end{proof}

Table \ref{matome1} summarizes these discussions.
From them, when considering {\sc extended circular nim}, we do not need to consider ${\rm ECN}(a_S, 1), {\rm ECN}(a_S, a - 1), {\rm ECN}(a_S, a)$ independently.
Thus, in this paper, we consider ${\rm ECN}(a_S, i)(1 < i < a - 1)$.

\begin{table}[H]
\begin{center}
\begin{tabular}{c|c}
Ruleset & Result \\ \hline
${\rm ECN}(a_S, 1)$ & Isomorphic to $a$-pile {\sc nim} \\
${\rm ECN}(a_S, a - 1)$ & Isomorphic to ${\rm MN}(a, a-1)$ or ${\rm ECN}(a_S, a-2)$ \\
${\rm ECN}(a_S, a)$ & Isomorphic to ${\rm MN}(a, a)$ or ${\rm ECN}(a_S, a-2)$ \\ \hline
\end{tabular}
\caption{Results for easy cases}
\label{matome1}
\end{center}
\end{table}

\subsection{{\sc Extended circular nim} with less than six piles}

We can characterize $\mathcal{P}$-positions in {\sc extended circular nim} with less than six piles by using early results.

First, we consider ${\rm ECN}(4_S, 2)$.
When $S = \{1\}$, the ruleset is already considered as a case of {\sc circular nim} in \cite{DH13}.
When $S = \{2\},$ we can regard each pair of diagonal piles as a single pile. 
Therefore, the position can be considered as a position in two-pile {\sc nim}. When $S = \{1, 2\},$ arbitrary two piles can be selected in ${\rm ECN}(4_{\{1,2\}}, 2)$. Therefore, this ruleset is isomorphic to ${\rm MN}(4,2).$ 

Next, we consider ${\rm ECN}(5_S, k)(k = 2, 3)$. 
When $S = \{1\}$, the ruleset is already considered as cases of {\sc circular nim} in \cite{ES96,Duf96}.
As shown in Fig. \ref{doukei}, ${\rm ECN}(5_{\{2\}}, i)$ is isomorphic to ${\rm ECN}(5_{\{1\}}, i)$. 
When $S = \{1, 2\}$, arbitrary two piles can be selected in ${\rm ECN}(5_{\{1, 2\}}, 2)$ and arbitrary three piles can be selected in ${\rm ECN}(5_{\{1, 2\}}, 3)$. Therefore, these rulesets are isomorphic to ${\rm MN}(5, 2)$ and ${\rm MN}(5, 3)$, respectively.

Table \ref{matome2} summarizes these results.
\begin{figure}[tb]
\centering
\includegraphics[width = 8cm]{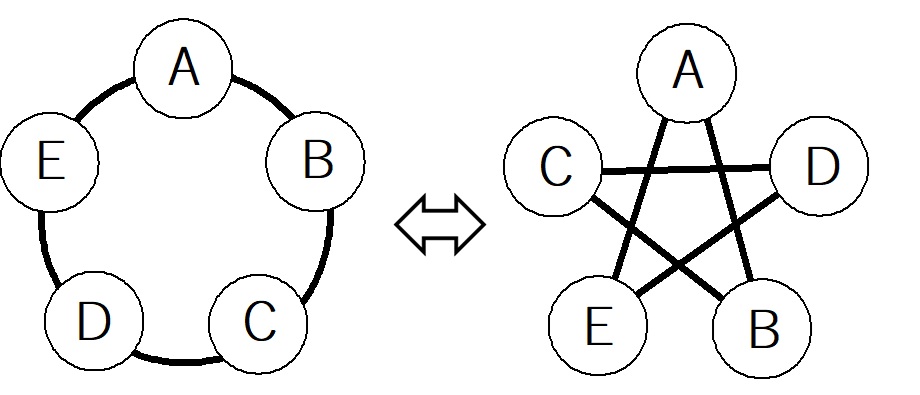}
\caption{${\rm ECN}(5_{\{1\}}, i)$ and ${\rm ECN}(5_{\{2\}},i)$ are isomorphic}
\label{doukei}
\end{figure}

\begin{table}[H]
\begin{center}
\begin{tabular}{c|c}
Ruleset & Result \\ \hline
${\rm ECN}(4_{\{1\}},2)$ & Shown in \cite{DH13} \\
${\rm ECN}(4_{\{2\}}, 2)$ & Regarded as two-pile {\sc nim} \\
${\rm ECN}(4_{\{1, 2\}}, 2)$ &  Isomorphic to ${\rm MN}(4,2)$ \\ \hline 
${\rm ECN}(5_{\{1\}},i)(2\leq i \leq 3)$ & Shown in \cite{ES96, Duf96}  \\
${\rm ECN}(5_{\{2\}},i)(2\leq i \leq 3)$ & Isomorphic to ${\rm ECN}(5_{\{1\}},i)$ \\
${\rm ECN}(5_{\{1,2\}},i)(2\leq i \leq 3)$ & Isomorphic to ${\rm MN}(5, i)$\\ \hline
\end{tabular}
\end{center}
\caption{Results for {\sc extended circular nim} with less than six piles}
\label{matome2}
\end{table}

\section{{\sc Extended circular nim} with six piles}
\label{sec_ecn6}
When there are six or more piles, some cases are not isomorphic to rulesets which are already studied.
We show the characterizations of $\mathcal{P}$-positions for such rulesets.

First, consider ${\rm ECN}(6_{\{2\}}, 2)$.
This ruleset can be considered as a disjunctive sum of two positions in ${\rm MN}(3,2)$. %In general, closed formula for Sprague-Grundy values of {\sc Moore's nim} is not found, but for the case of ${\rm MN}(m, m-1)$, the formula for Sprague-Grundy values is shown in Theorem \ref{thm:mooregrundy}. 
Therefore, by using Theorem \ref{thm:mooregrundy},  the set of $\mathcal{P}$-positions can be characterized.

In ${\rm ECN}(6_{\{2\}}, 3)$ and ${\rm ECN}(6_{\{2\}}, 4)$, The position $(n_0, n_1, n_2, n_3, n_4, n_5)$ can be considered as a position $(n_0 + n_2 + n_4, n_1 + n_3 + n_5)$ in two-pile {\sc nim}. Therefore, $(n_0, n_1, n_2, n_3, n_4, n_5)$ is a $\mathcal{P}$-position if and only if $n_0 + n_2 + n_4 = n_1 + n_3 + n_5.$ Also, in ${\rm ECN}(6_{\{3\}}, 2), {\rm ECN}(6_{\{3\}}, 3),$ and ${\rm ECN}(6_{\{3\}}, 4), $ the position  $(n_0, n_1, n_2, n_3, n_4, n_5)$ can be considered as a position $(n_0 + n_3, n_1 + n_4, n_2 + n_5)$ in three-pile {\sc nim}. Therefore, $(n_0, n_1, n_2, n_3, n_4, n_5)$ is a $\mathcal{P}$-position if and only if $(n_0 + n_3) \oplus (n_1 + n_4) \oplus (n_2 + n_5) = 0$.

Next, we consider ${\rm ECN}(6_{\{1,2\}}, 2)$.
\begin{theorem}
\label{gcn6122}
Consider a position $M = (n_0, n_1, n_2, n_3, n_4, n_5)$ in ${\rm ECN}(6_{\{1,2\}}, 2)$. $M$ is a $\mathcal{P}$-position if and only if $M \in P = \{(n_0, n_1, n_2, n_3, n_4, n_5) \mid n_0 \oplus n_3  = n_1 \oplus n_4 = n_2 \oplus n_5 \}$.
\end{theorem}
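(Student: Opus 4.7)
Write $x_i := n_i \oplus n_{i+3}$ for $i \in \{0,1,2\}$ and set $P := \{M : x_0 = x_1 = x_2\}$. I would show that $P$ satisfies the three axioms characterizing the $\mathcal{P}$-position set: it contains the terminal position $(0,\ldots,0)$, every option of a position in $P$ lies outside $P$, and every position outside $P$ has an option in $P$. The key structural observation, which I would establish first, is that in $ECN(6_{\{1,2\}}, 2)$ every legal move touches piles whose indices are \emph{pairwise distinct modulo $3$}: a one-pile move touches one pile; an adjacent move on $(n_j, n_{j+1})$ has offset $1 \not\equiv 0 \pmod 3$; a diagonal move on $(n_j, n_{j+2})$ has offset $2 \not\equiv 0 \pmod 3$. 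Consequently each move strictly changes exactly one or exactly two of the three $x_i$'s while leaving the remaining one(s) unchanged.

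For the implication ``$M \in P$ implies every option lies outside $P$'', let $v = x_0 = x_1 = x_2$. Any legal move strictly decreases some pile $n_j$, so $x_{j \bmod 3}$ strictly changes away from $v$, while by the structural observation at least one other $x_i$ is untouched and still equals $v$. Hence the three $x_i$'s are no longer all equal, so the option is not in $P$.

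For ``$M \notin P$ implies some option is in $P$'', set $v := \min(x_0, x_1, x_2)$ and $T := \{i : x_i > v\}$, so $1 \leq |T| \leq 2$. The main tool is the standard one-step nim lemma: if $n \oplus m > v$, then the highest bit of $(n \oplus m) \oplus v$ is set in at least one of $n, m$, and reducing that pile to $v \oplus (\text{the other})$ produces a strict decrease with new XOR equal to $v$. Applied to each $i \in T$ this picks a specific side $s_i \in \{i, i+3\}$ whose reduction forces $x_i$ to become $v$. If $|T| = 1$, the single-pile move on $n_{s_i}$ is legal and lands in $P$.

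The main obstacle is the case $|T| = 2$, say $T = \{a, b\}$: I must exhibit a single legal two-pile move that modifies precisely $n_{s_a}$ and $n_{s_b}$. I would resolve this by direct enumeration: for each pair $\{a, b\} \subset \{0,1,2\}$ there are exactly four two-pile moves whose pile indices project to $\{a, b\}$ modulo $3$ (two adjacent, two diagonal), and these four moves realize all four combinations of (side of pair $a$, side of pair $b$). For $\{a, b\} = \{0, 1\}$ the moves $(n_0, n_1), (n_3, n_4), (n_1, n_3), (n_4, n_0)$ realize the side-combinations $(0,0), (1,1), (1,0), (0,1)$ respectively, and the cases $\{0,2\}$ and $\{1,2\}$ follow by the same count under cyclic symmetry. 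Hence for any prescription $(s_a, s_b)$ coming from the nim lemma, exactly one of the four moves performs the required simultaneous reduction, producing an option in $P$ and completing the characterization.
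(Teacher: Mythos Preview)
Your proposal is correct and follows essentially the same approach as the paper's proof: both set $v=\min(x_0,x_1,x_2)$, observe that a move changes some but not all of the $x_i$, and for the $\mathcal{N}\to\mathcal{P}$ direction check that the required one- or two-pile move is always legal. Your write-up is more explicit than the paper's---you isolate the mod-$3$ structural observation and spell out the one-step nim lemma---whereas the paper, after assuming without loss of generality that $x_0$ attains the minimum, simply lists the eight relevant moves $(n_1),(n_4),(n_2),(n_5),(n_1,n_2),(n_1,n_5),(n_4,n_2),(n_4,n_5)$ and asserts the conclusion.
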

\begin{proof}
Let $N$ be the set of positions which are not in $P$.
It is clear that the terminal position is in $P$. 

Next, consider $M = (n_0, n_1, n_2, n_3, n_4, n_5) \in P$. Let $k = n_0 \oplus n_3  = n_1 \oplus n_4 = n_2 \oplus n_5$. Then, for any $M' = (n'_0, n'_1, n'_2, n'_3, n'_4, n'_5)$ which is an option of $M$, let $a_1 = n'_0 \oplus n'_3, a_2 = n'_1 \oplus n'_4, a_3 = n_2' \oplus n'_5$. At least one $a_i$, $a_i \neq k$ and there exists an $a_j$, $a_j = k$. Therefore, $M' \not \in P$.

Lastly, consider $M = (n_0, n_1, n_2, n_3, n_4, n_5)  \in N$. Let $k = \min(n_0 \oplus n_3, n_1 \oplus n_4, n_2 \oplus n_5)$. Without loss of generality, assume that $k = n_0 \oplus n_3$. Then, since $M \not \in P$, at least one of $n_1 \oplus n_4 > k$ and $n_2 \oplus n_5>k$ holds. In addition, each of $(n_1), (n_4), (n_2), (n_5), (n_1, n_2), (n_1, n_5), (n_4, n_2)$ and $(n_4, n_5)$ can be reduced in one move in ${\rm ECN}(6_{\{1,2\}}, 2)$. Therefore, $M$ has  an option $M'$ that satisfies $M' = (n_0, n'_1, n'_2, n_3, n'_4, n'_5)$, where $ n_0 \oplus n_3  = n'_1 \oplus n'_4 = n'_2 \oplus n'_5$.
\end{proof}

\begin{theorem}
\label{gcn6123}
Consider a position $M = (n_0, n_1, n_2, n_3, n_4, n_5)$ in ${\rm ECN}(6_{\{1,2\}}, 3)$. $M$ is a $\mathcal{P}$-position if and only if $M \in P = \{(n_0, n_1, n_2, n_3, n_4, n_5) \mid n_0 = n_3, n_1 = n_4, n_2 = n_5\}$.
\end{theorem}

\begin{proof}
Consider a position $(n_0, n_1, n_2, n_3, n_4, n_5)$ in ${\rm ECN}(6_{\{1,2\}}, 3)$. This position is a selective sum of $(n_0, n_3)$, $(n_1, n_4)$, and $(n_2, n_5)$, which are positions of two-pile {\sc nim}. Therefore, $n_0 \oplus n_3 = n_1 \oplus n_4 = n_2 \oplus n_5 = 0$, that is, $n_0 = n_3, n_1 = n_4, n_2 = n_5$ is a necessary and sufficient condition for that $(n_0, n_1, n_2, n_3, n_4, n_5)$ is a $\mathcal{P}$-position.
    
\end{proof}

\begin{theorem}
\label{gcn6124}
Let $P$ be the set of positions in ${\rm ECN}(6_{\{1,2\}}, 4)$ that satisfies the following: 
\begin{itemize}
\item Assume that $(n_0, n_1, n_2, n_3, n_4, n_5) \in P$. When each $n_i$ is expanded in the binary form, the summand of each digit contains four or no $1$s. In addition, for each digit contains four $1$s, the $1$s are in $(n_i, n_{(i+1) \bmod 6}, n_{(i+3) \bmod 6}, \allowbreak n_{(i+4) \bmod 6})$ or $(n_i, n_{(i+1) \bmod 6}, n_{(i+2) \bmod 6}, n_{(i+4) \bmod 6})$ for an integer $i \in \{0, 1, 2, 3, 4, 5\}$.  
\end{itemize}
Consider a position $M = (n_0, n_1, n_2, n_3, n_4, n_5)$ in ${\rm ECN}(6_{\{1,2\}}, 4)$.
Then, M is a $\mathcal{P}$-position if and only if $M \in P$.
\end{theorem}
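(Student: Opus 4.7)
The plan is to verify the three standard characterizing conditions: the terminal position lies in $_\circlearrowleft P$, every option of a position in $_\circlearrowleft P$ leaves it, and every position outside $_\circlearrowleft P$ has an option inside. The preliminary step is to unfold $P$ into a cleaner algebraic form. Since position $5$ is $0$ in both of the two allowed four-ones patterns, since positions $0$, $1$, $4$ are $1$ in both, and since positions $2$ and $3$ distinguish the two patterns (exactly one of them is $1$ in each non-zero column), $P$ coincides with the set of tuples $(a, a, b, c, a, 0)$ satisfying $a = b + c$ and $b \wedge c = 0$ (equivalently $a = b \oplus c$ with $b$ and $c$ having disjoint binary supports). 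Thus $M \in_\circlearrowleft P$ means some dihedral image of $M$ has this canonical form; the terminal position $(0,\dots,0)$ lies in $P$ trivially with $a = b = c = 0$.

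For the direction ``every option of $M \in_\circlearrowleft P$ lies outside $_\circlearrowleft P$'', I would work in the canonical form $(a, a, b, c, a, 0)$ and split by move type. A step-$2$ move alters a non-empty subset of the even triangle $\{v_0, v_2, v_4\}$ or of the odd triangle $\{v_1, v_3, v_5\}$; a step-$1$ move alters $1$ to $4$ consecutive piles. Key structural features of a canonical $\mathcal{P}$-position are that at most one pile equals zero (namely $n_5$) unless all piles are zero, that three equal piles $n_0 = n_1 = n_4$ sit at positions $0, 1, 4$, and that $n_2, n_3$ have disjoint binary supports with $n_2 + n_3 = n_0$. For each move I would argue that at least one such feature must be broken in the resulting position and that no rotation or reflection can simultaneously restore all three.

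For the reverse direction, given $M \notin_\circlearrowleft P$ I would construct an option in $_\circlearrowleft P$. The idea is to enumerate the dihedral orientations of $M$ (a small finite list) and, for each, attempt to reach a canonical target $(a', a', b', c', a', 0)$ via some single face of $\Delta$. In most cases I would place a smallest pile of $M$ at position $5$, so the move either includes $v_5$ and reduces it to $0$ or leaves $v_5$ untouched if it is already $0$, and the remaining piles are decreased on a step-$1$ or step-$2$ face large enough to effect the needed corrections. The free parameters are the target values $(a', b', c')$ with $b' \wedge c' = 0$ and $a' = b' + c'$, subject to a monotone decrease constraint on each pile in the chosen face.

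The main obstacle I expect is in the reverse direction: the simplicial complex $\Delta$ only supplies faces of size up to $4$ consecutive piles or up to $3$ same-parity piles, so a naive choice of target may require simultaneously changing piles of both parities that are not all consecutive, which no single face of $\Delta$ can realize. Overcoming this requires combining the dihedral freedom (choosing the orientation so that all piles needing adjustment lie within one face) with the bit-distribution freedom in choosing $(b', c')$ from any decomposition of $a'$ into bit-disjoint summands, which greatly expands the set of canonical tuples reachable by a single move. Tracking these two freedoms carefully, together with a case split by how many piles of $M$ are already zero and by the location of the bitwise discrepancy between $M$ and the nearest canonical tuple, will close the argument.
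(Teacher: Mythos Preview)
Your reformulation of $P$ as the set of tuples $(a,a,b,c,a,0)$ with $b\wedge c=0$ and $a=b+c$ is correct and is a good first step. However, the paper does not carry out the direct three-part verification you outline at all: its entire proof consists of invoking Theorem~4.4 of Ehrenborg--Steingr\'{i}msson \cite{ES96}, a structural result about $\mathcal{P}$-positions in simplicial {\sc nim} that applies to this particular complex and yields the stated characterization immediately. So the two approaches are genuinely different: the paper appeals to existing machinery on simplicial complexes (circuits and related constructions), whereas you propose an ad~hoc combinatorial case analysis specific to ${\rm ECN}(6_{\{1,2\}},4)$.

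As it stands, though, your proposal is a \emph{plan} rather than a proof. The forward direction (no move from a position in $_\circlearrowleft P$ stays in $_\circlearrowleft P$) is only sketched: you name the invariants that ``must be broken'' but do not actually rule out that some other dihedral image of the resulting tuple lands back in $P$, and with twelve dihedral images and several move types this requires real checking. The reverse direction is even less complete: you explicitly flag the central difficulty---that a single face of $\Delta$ may not touch all piles needing adjustment---and then assert that the combination of dihedral freedom and the freedom in splitting $a'$ into bit-disjoint $(b',c')$ ``will close the argument,'' without exhibiting the case split or the constructions. That is precisely the hard part, and nothing you have written guarantees it goes through. If you want a self-contained proof along your lines you must actually execute this case analysis; otherwise, the efficient route is the one the paper takes, namely to identify the complex as one covered by the Ehrenborg--Steingr\'{i}msson theorem.
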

\begin{proof}
We prove this theorem by using Theorem \ref{thm:nimregular}. Note that $\{\{v_{i}, v_{(i + 1)\bmod 6 }, v_{(i + 3)\bmod 6}, v_{(i + 4)\bmod 6}\} \mid 0 \le i \le 5\}= \{\{v_{i}, v_{(i + 1)}, v_{(i + 3)}, v_{(i + 4)\bmod 6}\} \mid 0 \le i \le 2\}.$
Let $\mathcal{B} = \{\emptyset\} \cup \{\{v_{i}, v_{(i+1) \bmod 6}, v_{(i + 2) \bmod 6}, v_{(i +4) \bmod 6}\} \mid 0 \le i \le 5\} \cup \{\{v_{i}, v_{(i + 1)}, v_{(i + 3)}, v_{(i + 4)\bmod 6}\} \mid 0 \le i \le 2\}$. We show $\mathcal{B}$ satisfies the three conditions of  {\sc nim}-basis.

For (A), it is clear that $\emptyset \in \mathcal{B}$.

For (B), assume that $F \in \Delta$ and $B, B' \in \mathcal{B}$ satisfy $B' = F \sqcup B$. %Then, $B \subseteq B'$. 
When $B = \emptyset,$ since $B' = F$ and $F \not \in \mathcal{B}$ if $|F| > 0$, $B' = F = \emptyset$ holds. For the other cases, since each of $B$ and $B'$ has four vertices, $B = B'$ and $F$ must be the empty face.

For (C), assume that $F \in \Delta$ and $S$ is a subset of $V$. We show that there exist faces $K, G \in \Delta$, with $K \subseteq F \subseteq G,$ such that $G - F \subseteq S$ and $(S - G) \sqcup K \in \mathcal{B}$. 

If $S \cup F$ is a face of $\Delta$, then let $G = S \cup F, K = \emptyset$. We have $(G - F) \subseteq S$ and $(S - G) \sqcup K = \emptyset \in \mathcal{B}$.

If $S \cup F$ is not a face of $\Delta$, we consider three cases. Note that if a subset of $V$ has less than four vertices, it is a face of $\Delta.$ If $S \cup F$ is a face of $\Delta,$ we consider the following three cases.

\begin{itemize}
    \item[(i)] When $S \cup F$ has four vertices and is not a face of $\Delta$, we note that $S \cup F \in \mathcal{B}$. Let $G = K = F$. Then, $G - F = \emptyset \subseteq S$ and $(S - G)\sqcup K = S \cup F \in \mathcal{B}$.
    
    \item[(ii)] When $S \cup F$ has five vertices, without loss of generality, we assume that $S \cup F = \{v_0, v_1, v_2, v_3, v_4\}$. We consider five cases:
    \begin{itemize}
        \item[(ii-a)] When $F = \{v_0, v_4\}$, let $G = F \cup \{v_2\}$ and $K = F$. Then, $G - F = \{v_2\} \subset S$ and $(S - G) \sqcup K = \{v_0, v_1, v_3, v_4\} \in \mathcal{B}$.
        \item[(ii-b)] When $F (\neq \{v_0, v_4\})$ has two, three, or four vertices, let $G=  F$. Then, we have  $G-F = \emptyset \subseteq S$. Since $F$ has at least two vertices, $S - G = S - F$ has at most three vertices. 
        
        If $v_1 \in F,$
        $S - G$ is a subset of $\{v_0, v_2, v_3, v_4\}$, then $K = F - \{v_1\}$ satisfies $K \subset F$ and $(S - G) \sqcup K = S \cup F - \{v_1\} = \{v_0, v_2, v_3, v_4\} \in \mathcal{B}$. 
        
        Similarly, if $v_2 \in F$ (resp. $v_3 \in F),$ then $K = F - \{v_2\}$ (resp. $K = F - \{v_3\}$) satisfies $K \subset F$ and $(S - G) \sqcup K \in \mathcal{B}$.
       
        Since $F$ has at least two vertices, the remaining case is $\{v_0, v_4\} \subset F$. 
        From the rule of ${\rm ECN}(6_{\{1, 2\}}, 4)$ and $v_5 \not \in F$, $F = \{v_0, v_2, v_4\}$ or $F = \{v_0, v_4\}$ holds, but the latter case has been considered. For the former case, let $K = F - \{v_2\}$. We have $K \subset F$ and $(S - G) \sqcup K = S \cup F - \{v_2\} = \{v_0, v_1, v_3, v_4\} \in \mathcal{B}$.  
        \item[(ii-c)] When $F = \{v_1\}, F = \{v_2\}, $ or $F = \{v_3\}$, let $G = F$ and $K = \emptyset$. Then, $G - F = \emptyset \subseteq S$ and $(S - G) \sqcup K = S - G \in \{\{v_0, v_2, v_3, v_4\}, \{v_0, v_1, v_3, v_4\}, \{v_0, v_1, v_2, v_4\}\} \subset \mathcal{B}$.
        \item[(ii-d)] When $F = \{v_0\}$ (resp. $F = \{v_4\}$), let $G = \{v_0, v_1\}$ (resp. $G = \{v_3, v_4\}$) and $K = F$. Then, $G - F = \{v_1\} \subseteq S$ (resp. $G - F = \{v_3\} \subseteq S$) and $(S - G) \sqcup K = \{v_0, v_2, v_3, v_4\} \in \mathcal{B}$ (resp. $(S - G) \sqcup K = \{v_0, v_1, v_2, v_4\} \in \mathcal{B}$). 
        \item[(ii-e)] When $F = \emptyset$, let $G = \{v_2\}$ and $K = F = \emptyset$. Then, $G - F  = \{v_2\} \subseteq S$ and $(S -G) \sqcup K = \{v_0, v_1, v_3, v_4\} \in \mathcal{B}$. 
    \end{itemize}
    \item[(iii)] When $S \cup F$ has six vertices, without loss of generality, we assume that $F \in \{ \emptyset, \{v_0\}, \{v_0, v_1\}, \{v_0, v_2\}, \allowbreak \{v_0, v_3\}, \{v_0, v_1. v_2\}, \{v_0, v_1, v_3\}, \{v_0, v_2, v_4\}, \{v_0, v_1, v_2, v_3\}\}.$
    We consider four cases: 
    \begin{itemize}
        \item[(iii-a)] When $F = \emptyset, F = \{v_0\}$, or $F = \{v_0, v_2\}$, let $G = \{v_0, v_2\}$ and $K = \emptyset$. Then, $F \subseteq G$ and $(G - F) \subseteq S$. Also, $K \subseteq F$ and $(S - G) \sqcup K = \{v_1, v_3, v_4, v_5\} \in \mathcal{B}$ hold.
        \item[(iii-b)] When $F = \{v_0, v_1\},$ let $G = \{v_0, v_1, v_2\}$ and $K = \{v_1\}$. Then, $F \subseteq G$ and $(G - F) = \{v_2\} \subset S$. Also, $K \subseteq F$ and $(S - G) \sqcup K = \{v_1, v_3, v_4, v_5\} \in \mathcal{B}$ hold.
        \item[(iii-c)] When $F = \{v_0, v_3\}, F = \{v_0, v_1, v_3\},$ or $F = \{v_0, v_1, v_2, v_3\}$, let $G = F$ and $K = \{v_1, v_2, v_4, v_5\} \cap F \subseteq F$. Note that $S - G = S - F$ is a subset of $\{v_1, v_2, v_4, v_5\} \in \mathcal{B}$. 
        Then, $G - F = \emptyset \subset S$ and $(S  - G) \sqcup K = (S - F) \sqcup (\{v_1, v_2, v_4, v_5\} \cap F) = \{v_1, v_2, v_4, v_5\} \in \mathcal{B}$ hold. 
        \item[(iii-d)] When $F = \{v_0, v_1, v_2\}$ or $F = \{v_0, v_2, v_4\},$ let $G = F$ and $K = \{v_1, v_3, v_4, v_5\} \cap F \subseteq F$. Note that $S - G = S - F$ is a subset of $\{v_1, v_3, v_4, v_5\}.$ Then, $G - F = \emptyset \in S$ and $(S - G) \sqcup K = (S - F) \sqcup (\{v_1, v_3, v_4, v_5\} \cap F) = \{v_1,v_3,v_4,v_5\} \in \mathcal{B}.$ 
    \end{itemize}
\end{itemize}
%$B, B' \in \mathcal{B}$のとき$B' = F \oplus B$なら$F = \emptyset$:頂点数を考えればそれはそう

%$F$を面，$S$を頂点集合の部分集合としたとき面$K,G$があって$K \subseteq F \subseteq G$で$(G - F) \subseteq S$と$(S - G)\oplus K \in \mathcal{B}$が成り立つ．

%$S \cup F$が$\Delta$の面であるとき．
%$G = S \cup F, K = \emptyset$とすると，$(G - F) \subseteq S$であって$S - G \oplus K = \emptyset \in \mathcal{B}$である．

%$S \cup F$が4点集合で$\Delta$の面ではないとき
%$S \cup F \in \mathcal{B}$となることに注意する．$G = K = F$とすると，$G - F = \emptyset \subseteq S$であって，$(S - G)\oplus K = S \cup F \in \mathcal{B}$となる．

%$S \cup F$が5点集合であるとき

%$F$が隣接4点のとき，3点集合のとき，または2点集合のとき$G=  F$とすると$G-F = \emptyset \subseteq S$であって適切な$K \subset F$も存在する．
%$F$が一点のときは端でない場合は$G = F$として$K= \emptyset$にすればOk
%$F$が一点で端の場合は$G$としてもう一つ隣接した点を増やせばよい
%$F$が空集合の場合は$G$として端ではない一点，$K$として空集合を取ればよい

%$S \cup F$が6点集合であるとき

%$F$が隣接4点のときまたは3点集合のとき，隣接していない2点のとき$G=F$とすると$G-F = \emptyset \subseteq S$であって適切な$K \subset F$も存在する．
%$F$が隣接している2点のときは$G$としてもう一つ隣接した点を増やせばよい．
%$F$が一点の場合は$G$として両横の点を増やしたものを考えればよい．
%$F$が空集合の場合は$G$として例えば一つ飛ばしの2点，$K$として空集合を取ればよい．
\end{proof}

For ${\rm ECN}(6_{\{1,3\}}, 2),$ we show the following more generalized theorem.

\begin{theorem}
\label{thm_2m2}
Let $m (> 1)$ be an integer and assume that $h$ is maximal odd integer not more than $m$.
Consider a position $M = (n_0, n_1, \ldots, n_{2m-1})$ in ${\rm ECN}((2m)_{\{1, 3, \ldots, h\}},2)$. Then, $M$ is a $\mathcal{P}$-position if and only if $M \in P=\{(n_0, n_1, \ldots, n_{2m-1}) \mid n_0 \oplus n_2 \oplus \cdots \oplus n_{2m-2} = n_1 \oplus n_3 \oplus \cdots \oplus n_{2m-1} = 0\}$.
\end{theorem}
\begin{proof}
    %The proof is similar to the proof of Theorem \ref{thm8132}.
        In ${\rm ECN}((2m)_{\{1,3,\ldots, h\}}, 2)$, the player can reduce at most one of $n_0, n_2, \ldots, n_{2m-2}$ and at most one of $n_1, n_3, \ldots, n_{2m-1}$ in one move. Therefore, this ruleset is the selective sum of two positions $(n_0, n_2, \ldots, n_{2m-2})$ and $(n_1, n_3, \ldots, n_{2m-1})$ in $m$-pile {\sc nim}. Thus, from Theorems \ref{thm_nim} and \ref{thm:selsum}, $M$ is a $\mathcal{P}$-position if and only if $n_0 \oplus n_2 \oplus \cdots \oplus n_{2m-2} = n_1 \oplus n_3 \oplus \cdots \oplus n_{2m-1} = 0.$
\end{proof}

\begin{corollary}
\label{gcn6132}
Consider a position $M = (n_0, n_1, n_2, n_3, n_4, n_5)$ in ${\rm ECN}(6_{\{1,3\}}, 2)$. Then, $M$ is a $\mathcal{P}$-position if and only if $M \in P = \{(n_0, n_1, n_2, n_3, n_4, n_5) \mid n_0 \oplus n_2 \oplus n_4 = n_1 \oplus n_3 \oplus n_5 = 0\}$.
\end{corollary}

Note that Theorem \ref{thm_2m2} is also a generalization of the result on ${\rm ECN}(4_{\{1\}}, 2)$.

%\begin{proof}
%Consider a position $(n_0, n_1, n_2, n_3, n_4, n_5)$ in ${\rm ECN}(6_{\{1,3\}}, 2)$. This position is a selective sum of $(n_0, n_2, n_4)$ and $(n_1, n_3, n_5)$, which are positions of three-pile {\sc nim}. 
%Therefore, $(n_0, n_1, n_2, n_3, n_4, n_5)$ is a $\mathcal{P}$-position if and only if $n_0 \oplus n_2 \oplus n_4 = n_1 \oplus n_3 \oplus n_5 = 0$ .
%\end{proof}

\begin{theorem}
\label{gcn6233}
Consider a position $M = (n_0, n_1, n_2, n_3, n_4, n_5)$ in ${\rm ECN}(6_{\{2,3\}}, 3)$. $M$ is a $\mathcal{P}$-position if and only if $M \in_\circlearrowleft P = \{(n_0, n_1, n_2, n_3, n_4, n_5) \mid n_0 + n_2 + n_4 = n_1 + n_3 + n_5,  n_0 \oplus n_1 \oplus n_2 = 0,  n_0 \leq n_3, n_1 \leq n_4, n_2 \leq n_5\}$.
\end{theorem}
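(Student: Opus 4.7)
The proof follows the standard three-step verification. First, the terminal position $(0,0,0,0,0,0)$ lies in $P$, since every defining condition of $P_0$ is trivially satisfied at zero. A key observation throughout is that the balance equation $S_e := n_0 + n_2 + n_4 = n_1 + n_3 + n_5 =: S_o$ is invariant under all twelve rotations and reflections of the hexagon (odd rotations swap $S_e$ and $S_o$, but the equality persists), and that both the ruleset and $P$ are invariant under the same dihedral group.

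For the second step, I may assume without loss of generality that $M \in P_0$. The legal moves split into even-triangle moves on nonempty subsets of $\{v_0, v_2, v_4\}$, odd-triangle moves on nonempty subsets of $\{v_1, v_3, v_5\}$, and opposite-pair moves on $\{v_i, v_{i+3}\}$ for $i \in \{0,1,2\}$. A triangle move strictly changes only one of $S_e, S_o$, and a pair move removing unequal amounts $\delta \neq \epsilon$ also breaks balance, so both destroy $P$-membership in every orientation. In the remaining subcase of a pair move with $\delta = \epsilon > 0$, balance is preserved, and I dispatch it by checking each of the twelve orientations: in the identity orientation the XOR equation becomes $n_0 \oplus (n_0 - \delta) = 0$, forcing $\delta = 0$; in each rotated or reflected orientation that reverses some pair inequalities, the original and reversed constraints together force $n_i = n_{i+3}$ on those pairs, and combined with the relation $d_1 = d_0 + d_2$ (where $d_i = n_{i+3} - n_i$) the shifted XOR equation reduces to the same contradiction $n_0 \oplus (n_0 - \delta) = 0$.

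For the third step, given $M \notin P$, I construct a move into $P$ by splitting on the balance condition. If $S_e \neq S_o$, then by rotating one step if necessary I may assume $S_e > S_o$, and I use an even-triangle move that replaces $(n_0, n_2, n_4)$ by $(x, x \oplus n_1, S_o - x - (x \oplus n_1))$; the XOR and balance conditions are enforced automatically, and I argue that $x$ can be chosen in $\{0, \ldots, \min(n_0, n_3)\}$ so that the remaining inequalities $x \oplus n_1 \leq \min(n_2, n_5)$ and $n_1 \leq S_o - x - (x \oplus n_1) \leq n_4$ also hold, passing to a different rotation or reflection when the canonical parametrization is infeasible. If $S_e = S_o$ but $M \notin P$, then since triangle moves and unequal pair moves both break balance, only a pair move with $\delta = \epsilon > 0$ can land in $P$; for each choice of pair and each of the six consecutive triples, the $\delta$ that zeros the triple-XOR is forced by the current XOR-values, giving eighteen candidate moves, and a case check verifies that at least one candidate yields a valid $\delta$ in the feasible range $[1, \min(n_i, n_{i+3})]$ compatible with the inequality conditions of that orientation. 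The balanced subcase is the main obstacle: a single pair move must simultaneously zero out a triple-XOR and produce three consecutive piles uniformly smaller (or uniformly larger) than their opposites, and handling the small-pile edge cases that squeeze the feasible range of $\delta$ will require the bulk of the case analysis.
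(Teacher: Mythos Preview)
Your three-step skeleton and the $P \to N$ argument track the paper's closely. The $N \to P$ direction, however, has both a misjudgment and a genuine gap.

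You flag the balanced case as the main obstacle, but the paper dispatches it in two lines: once balance holds, one can rotate so that $n_0 \leq n_3,\ n_1 \leq n_4,\ n_2 \leq n_5$ simultaneously (the relation $d_1 = d_0 + d_2$ you noted guarantees some rotation achieves this), and then $n_0 \oplus n_1 \oplus n_2 \neq 0$ means ordinary three-pile \textsc{nim} supplies an $n'_i < n_i$ ($i \in \{0,1,2\}$) with XOR zero; the pair move reducing both $n_i$ and $n_{i+3}$ by $n_i - n'_i$ preserves balance and all three inequalities. No eighteen-candidate enumeration is needed.

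The unbalanced case is where the real work lies, and your commitment to an even-triangle move when $S_e > S_o$ is not enough. Take $M = (10, 5, 2, 4, 1, 3)$, where $S_e = 13 > 12 = S_o$. Any even-triangle move restoring balance must satisfy $n'_0 + n'_2 + n'_4 = 12$ with $n'_2 \leq 2$ and $n'_4 \leq 1$, leaving only $(n'_0, n'_2, n'_4) \in \{(9,2,1),\, (10,2,0),\, (10,1,1)\}$; direct computation of all six consecutive XOR-triples shows none of the three resulting positions lies in $\in_\circlearrowleft P$ (for $(10,5,1,4,1,3)$ the triple $\{n_1,n_2,n_3\}$ does XOR to zero, but both orientations using that triple require $n_1 \leq n_4$, i.e.\ $5 \leq 1$). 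The paper's move here is the opposite-pair move $(n_0, n_3) \to (8, 3)$, landing at $(8,5,2,3,1,3) \in_\circlearrowleft P$ via the orientation with XOR triple $\{n_2, n_3, n_4\}$. In general the paper's unbalanced case splits on the signs of $n_2 - n_5$ and $n_4 - n_1$, and then further on XOR comparisons among $n_3, n_4, n_5$, choosing opposite-pair moves in some sub-cases and triangle moves in others; a triangle-only strategy cannot be salvaged merely by changing the target orientation.
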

\begin{proof}
Let $N = \{M = (n_0, n_1, n_2, n_3, n_4, n_5) \mid M \not \in_\circlearrowleft P\}$. The terminal position is in $P$.

Next, consider a position $M \in_\circlearrowleft P$. Without loss of generality, we assume that $M = (n_0, n_1, n_2, n_3, n_4, n_5) \in P$.  Then, $n_0 \leq n_3, n_1 \leq n_4,$ and $ n_2 \leq n_5$ hold. Let $M' = (n'_0, n'_1, n'_2, n'_3, n'_4, n'_5)$ be an option of $M$. When the player reduces some of $n_0, n_2, n_4$, or reduces some of $n_1, n_3, n_5$, $n'_0 + n'_2 + n'_4 \neq n'_1 + n'_3 + n'_5$ holds and $M' \not \in_\circlearrowleft P$.  For the case that the player reduces the same number from $n_0$ and $n_3$, $n'_0 + n'_2 + n'_4 = n'_1 + n'_3 + n'_5$ is satisfied but $n'_0 \oplus n'_1 \oplus n'_2  = n'_0 \oplus n_1 \oplus n_2 \neq 0$ holds. We show $M' \not \in_\circlearrowleft P$ by contradiction: 

\begin{itemize}
\item [(i)]Assume that $n'_1 \oplus n'_2 \oplus n'_3 = 0$. If $n_0 = n_3$, then $n'_0 = n'_3$, which contradicts to $n'_0 \oplus n'_1 \oplus n'_2 \neq 0$. In addition, $n_0 < n_3$ does not satisfy $n'_3 \leq n'_0$.

\item [(ii)]Assume that $n'_2 \oplus n'_3 \oplus n'_4 =  n_2 \oplus n'_3 \oplus n_4 = 0$. In order to satisfy  $M' \in_\circlearrowleft P$, it is necessary to be $n'_3 \leq n'_0, n_4 \leq n_1$. From the assumption, $n'_0 \leq n'_3, n_1 \leq n_4$, so $n'_0 = n'_3, n_1 = n_4$ hold. However, then we have $n_2 \oplus n'_3 \oplus n_4 = n_2 \oplus n'_0 \oplus n_1 = 0$, which contradicts to $n'_0 \oplus n_1 \oplus n_2 \neq 0$.

\item [(iii)]Assume that $n'_3 \oplus n'_4 \oplus n'_5 = n'_3 \oplus n_4 \oplus n_5 = 0$. In order to satisfy $M' \in_\circlearrowleft P$, it is necessary to be $n'_3 \leq n'_0, n_4 \leq n_1, n_5 \leq n_2$. From the assumption, $n'_0 \leq n'_3, n_1 \leq n_4, n_2 \leq n_5$, so $n'_0 = n'_3, n_1 = n_4, n_2 = n_5$ hold. This contradicts to $n'_0 \oplus n_1 \oplus n_2 \neq 0$.
\end{itemize}

Even if we assume that $n'_4 \oplus n'_5 \oplus n'_0 = 0$ or $n'_5 \oplus n'_0 \oplus n'_1 = 0$, we have a contradiction in a similar way. Therefore, $M' \in N$. 

For the cases that the same number is reduced from $n_1$ and $n_4$, or the same number is reduced from $n_2$ and $n_5$, we have $M' \in N$ in similar ways. 

%Also, for any position $M \not \in P$ but $M \in_\circlearrowleft P$, we can show that every option $M' \in N$ in a similar way.

Finally, consider a position $M = (n_0, n_1, n_2, n_3, n_4, n_5) \in N$.

\begin{itemize}
\item[(i)]Assume that $n_0 + n_2 + n_4 \neq n_1 + n_3 + n_5 $. Without loss of generality, $n_0 + n_2 + n_4 > n_1 + n_3 + n_5$.
Then, $n_0 > n_3$, $n_2 > n_5$, or $n_4 > n_1$ holds. Without loss of generality, we assume that $n_0 > n_3$.
\begin{itemize}
\item[(i-a)]Assume that $n_2 \leq n_5, n_4 \leq n_1$.
\begin{itemize}
\item[(i-a-1)]If $n_2 \oplus n_4 \leq n_3$, then let $n'_3 = n_2 \oplus n_4, n'_0 = n_1 + n'_3 + n_5 - n_2 - n_4$ and a move changes $(n_0, n_3)$ to $(n'_0, n'_3)$ is a legal move, which satisfies $n'_0 + n_2 + n_4 = n_1 + n'_3 + n_5, n_2 \oplus n'_3 \oplus n_4 = 0, n_2 \leq n_5, n'_3 \leq n'_0, n_4 \leq n_1$.
\item[(i-a-2)] If $n_2 \oplus n_4 > n_3$, then $n_2 \geq n_3 \oplus n_4$ or $n_4 \geq n_2 \oplus n_3$ holds. %3山ニムから．補足が必要になりそうな予感
Without loss of generality, we assume that $n_2 \geq n_3 \oplus n_4$. Let $n'_2 = n_3 \oplus n_4 \leq n_2$. If $n_0 + n'_2 + n_4 < n_1 + n_3 + n_5$, then let $n'_5 = n_0 + n'_2 + n_4 - n_1 - n_3$. It satisfies $n'_2 \leq n'_5 < n_5$ and $n_0 + n'_2 + n_4 = n_1 + n_3 + n'_5, n'_2 \oplus n_3 \oplus n_4 = 0, n'_2 \leq n'_5, n_3 < n_0, n_4 \leq n_1$ hold. Thus, $M' = (n_0, n_1, n'_2, n_3, n_4, n'_5)$ is an option of $M$ and $M' \in_\circlearrowleft P$.  On the other hand, if $n_0 + n'_2 + n_4 \geq n_1 + n_3 + n_5$, then $n'_2 \leq n_5, n_4 \leq~n_1$, so by letting $n'_0 = n_1 + n_3 + n_5 - n'_2 - n_4$, we have  $n'_0 + n'_2 + n_4 = n_1 + n_3 + n_5, n'_2 \oplus n_3 \oplus n_4 = 0, \allowbreak n'_2 \leq n_5, n_3 \leq n'_0, n_4 \leq n_1$. Thus, $M' = (n'_0, n_1, n'_2, n_3, n_4, n_5)$ is an option of $M$ and $M' \in_\circlearrowleft P$.
\end{itemize}

\item[(i-b)]Assume that $n_2 > n_5, n_4 \leq n_1$. Note that at least one of $n_4 \oplus n_5 \leq n_3, n_4 \oplus n_3 \leq n_5$ and $ n_5 \oplus n_3 \leq n_4$ holds. We separate into five cases.
\begin{itemize}
\item[(i-b-1)]Consider the case that $n_4 \oplus n_5 \leq n_3$ and $n_2 + n_4 \leq n_1 + n_5$. Let $n'_3 = n_4 \oplus n_5, n'_0 = n_1 + n_5 - n_2 - n_4 + n'_3$, then $n'_0 + n_2 + n_4 = n_1 + n'_3 + n_5, n'_3 \oplus n_4 \oplus n_5 = 0, n'_3 \leq n'_0, n_5 < n_2, n_4 \leq n_1$ and $M' = (n'_0, n_1, n_2, n'_3, n_4, n_5) \in_\circlearrowleft P$. In addition, we have $n_3 \geq n'_3$ and $n_0 > n_1 + n_3 + n_5 - n_2 - n_4 \geq n_1 + n_5  + n'_3 - n_2 -n_4 = n'_0$ since $n_0 + n_2 + n_4 > n_1 + n_3 + n_5$. Thus, $M'$ is an option of $M$.
\item[(i-b-2)]
Consider the case that $n_4 \oplus n_5 \leq n_3$ and $n_2 + n_4 > n_1 + n_5$. Then, if 
$n_1 + n_3 + n_5 > (n_4 \oplus n_5) + n_2 + n_4$, that is, 
$n_3 >  n_2 + n_4 - n_1 - n_5 + (n_4 \oplus n_5) > (n_4 \oplus n_5)$, let 
$n'_3 = n_2 + n_4 - n_1 - n_5 + (n_4 \oplus n_5), n'_0 = n_4 \oplus n_5$.  We have $n'_0 + n_2 + n_4 = n_1 + n'_3 + n_5, n_4 \oplus n_5 \oplus n'_0 = 0, n_4 \leq n_1, n_5 < n_2, n'_0 < n'_3$. Thus, $M' = (n'_0, n_1, n_2, n'_3, n_4, n_5)$ is an option of $M$ and  $M' \in_\circlearrowleft P$.
On the other hand, if $n_1 + n_3 + n_5 \leq (n_4 \oplus n_5) + n_2 + n_4$, then since $n_3 \geq n_4 \oplus n_5$ and $n_1 \geq n_4$, there exists $n'_2$ which satisfies $n_5 \leq n'_2 \leq n_2$ and $n_1 + n_3 + n_5 = (n_4 \oplus n_5) + n'_2 + n_4$. Therefore,  let $n'_0 = n_4 \oplus n_5$ and we have $n'_0 + n'_2 + n_4 = n_1 + n_3 + n_5, n_4 \oplus n_5 \oplus n'_0 = 0, n_4 \leq n_1, n_5 \leq n'_2, n'_0 \leq n_3$. Thus, $M' = (n'_0, n_1, n'_2, n_3, n_4, n_5)$ is an option of $M$ and $M' \in_\circlearrowleft P$.
\item[(i-b-3)]
Consider the case that $n_4 \oplus n_3 \leq n_5$ and $n_0 + n_4 \leq n_1 + n_3$. Let $n'_5 = n_4 \oplus n_3, n'_2 = n_1 + n_3 - n_0 - n_4 + n'_5$, then $n_0 + n'_2 + n_4 = n_1 + n_3 + n'_5, n_3 \oplus n_4 \oplus n'_5 = 0, n_3 < n_0, n'_5 \leq n'_2, n_4 \leq n_1$ and $M' = (n_0, n_1, n'_2, n_3, n_4, n'_5) \in_\circlearrowleft P$. In addition, we have $n_5 \geq n'_5$ and $n_2 > n_1 + n_3 + n_5 - n_0 - n_4 \geq n_1 + n'_5  + n_3 - n_0 -n_4 = n'_2$
since $n_0 + n_2 + n_4 > n_1 + n_3 + n_5$. Thus, $M'$ is an option of $M$.
\item[(i-b-4)]Consider the case that $n_4 \oplus n_3 \leq n_5$ and $n_0 + n_4 > n_1 + n_3$. Then, if 
$n_1 + n_3 + n_5 > (n_4 \oplus n_3) + n_0 + n_4$, that is, 
$n_5 > n_0 + n_4 - n_1 - n_3 + (n_4 \oplus n_3) > (n_4 \oplus n_3)$, let 
$n'_5 = n_0 + n_4 - n_1 - n_3 + (n_4 \oplus n_3), n'_2 = n_4 \oplus n_3$.  We have $n_0 + n'_2 + n_4 = n_1 + n_3 + n'_5, n'_2 \oplus n_3 \oplus n_4 = 0, n'_2 < n'_5, n_3 < n_0, n_4 \leq n_1$. Thus, $M' = (n_0, n_1, n'_2, n_3, n_4, n'_5)$ is an option of $M$ and $M' \in_\circlearrowleft P$.
On the other hand, if $n_1 + n_3 + n_5 \leq (n_4 \oplus n_3) + n_0 + n_4$, then since $n_5 \geq n_4 \oplus n_3$ and $n_1 \geq n_4$, there exists $n'_0$ which satisfies $n_3 \leq n'_0 \leq n_0$ and $n_1 + n_3 + n_5 = (n_4 \oplus n_3) + n'_0 + n_4$. Therefore,  let $n'_2 = n_4 \oplus n_3$ and we have $n'_0 + n'_2 + n_4 = n_1 + n_3 + n_5, n'_2 \oplus n_3 \oplus n_4 = 0, n'_2 \leq n_5, n_3 \leq n'_0, n_4 \leq n_1$. Thus, $M' = (n'_0, n_1, n'_2, n_3, n_4, n_5)$ is an option of $M$ and $M' \in_\circlearrowleft P$.

\item[(i-b-5)]
Consider the case that $n_5 \oplus n_3 \leq n_4$.
Let $n'_4 = n_5 \oplus n_3$.
If $n'_4 + n_0 + n_2 \leq n_1 + n_3 + n_5$, for $n'_1 = n'_4 + n_0 + n_2 - n_3 - n_5$, we have $n'_1 > n'_4$ since $n_0 + n_2 - n_3 - n_5 > 0$. In addition, we have $n_3 \oplus n'_4 \oplus n_5 = 0, n_0 + n_2 + n'_4 = n'_1 + n_3 + n_5, n_3 < n_0, n_5 < n_2$. Thus, $M' = (n_0, n'_1, n_2, n_3,n'_4, n_5)$ is an option of $M$ and $M' \in_\circlearrowleft P$.
On the other hand, if $n'_4 + n_0 + n_2 > n_1 + n_3 + n_5$, since $n'_4 \leq n_1$, there exist $n'_0, n'_2$ which satisfy $n_5 \leq n'_2 \leq n_2, n_3 \leq n'_0 \leq n_0$ and $n'_0 + n'_2 + n'_4 = n_1 + n_3 + n_5$. Here, $n_3 \oplus n'_4 \oplus n_5 = 0, n'_4 \leq n_1$ also holds. Thus, $M' = (n'_0, n_1, n'_2, n_3,n'_4, n_5)$ is an option of $M$ and $M' \in_\circlearrowleft P$.
\end{itemize}

\item[(i-c)] For the case that $n_2 \leq n_5, n_4 > n_1$, we can show $M$ has an option $M' \in_\circlearrowleft P$ in a similar way to (i-b). 
\item[(i-d)]Assume that $n_2 > n_5, n_4 > n_1$. Then $n_1 \oplus n_3 \leq n_5, n_3 \oplus n_5 \leq n_1, $ or $ n_5 \oplus n_1 \leq n_3$ holds. Without loss of generality, $n_1 \oplus n_3 \leq n_5$. Let $n'_2 = n_1 \oplus n_3 < n_2$. 
If $n_0 + n'_2 + n_4 > n_1 + n_3 + n_5$, since $n'_2 \leq n_5$, there exist $n'_0, n'_4$ which satisfy $n_3 \leq n'_0 \leq n_0, n_1 \leq n'_4 \leq n_4$ and $n'_0 + n'_2 + n'_4 = n_1 + n_3 + n_5 $. Here, $n_1 \oplus n'_2 \oplus n_3 = 0$ holds. Thus, $M' = (n'_0, n_1, n'_2, n_3,n'_4, n_5)$ is an option of $M$ and $M' \in_\circlearrowleft 
P$.
On the other hand, if $n_0 + n'_2 + n_4 \leq n_1 + n_3 + n_5$, since $n_0 + n_4 > n_1 + n_3$, there exists $n'_5$ which satisfies $n'_2 < n'_5 \leq n_5 $ and $n_0 + n'_2 + n_4 = n_1 + n_3 + n'_5$. Here, $n_1 \oplus n'_2 \oplus n_3 = 0,$ $n_3 < n_0$, and $n_1 < n_4$. Thus, $M' = (n_0, n_1, n'_2, n_3,n_4, n'_5)$ is an option of $M$ and $M' \in_\circlearrowleft P$.
\end{itemize}
\item[(ii)]Assume that $n_0 + n_2 + n_4 = n_1 + n_3 + n_5$. Then 
%$n_0 \leq n_3, n_2 \leq n_5, n_4 \geq n_1$ or $n_0 \leq n_3, n_2 \geq n_5, n_4 \leq n_1$ or $n_0 \geq n_3, n_2 \leq n_5, n_4 \leq n_1$ or $n_0 \geq n_3, n_2 \geq n_5, n_4 \leq n_1$ or $n_0 \geq n_3, n_2 \leq n_5, n_4 \geq n_1$ or $n_0 \leq n_3, n_2 \geq n_5, n_4 \geq n_1$ holds. 
without loss of generality, we assume that $n_0 \leq n_3, n_2 \leq n_5, n_4 \geq n_1$. Since $M \in N$, we have $n_0 \oplus n_1 \oplus n_2 \neq 0$. Here, consider a position $(n_0, n_1, n_2)$ in three-pile {\sc nim}. From this position, there is a move to make a $\mathcal{P}$-position. Assume that for $n'_0 < n_0$, $(n'_0, n_1, n_2)$ is a $\mathcal{P}$-position in three-pile {\sc nim}. Let $k =n_0 - n'_0$. Then, $M$ has an option $M' =  (n'_0, n_1, n_2, n_3 - k, n_4, n_5)$, which satisfies $n'_0 + n_2 + n_4 = n_1 + n_3 - k + n_5, n'_0 \oplus n_1 \oplus n_2 =0, n'_0 \leq n_3 - k, n_1 \leq n_4, n_2 \leq n_5$. Therefore, $M' \in_\circlearrowleft P$.
\end{itemize}
\end{proof}

Finally, it is easy to confirm that ${\rm ECN}(6_{\{1,3\}}, 4)$ is isomorphic to ${\rm ECN}(6_{\{1\}}, 4), {\rm ECN}(6_{\{2,3\}}, 4)$ is isomorphic to ${\rm ECN}(6_{\{2,3\}}, 3),{\rm ECN}(6_{\{1,2,3\}}, 2)$ is isomorphic to ${\rm MN}(6, 2),$ and ${\rm ECN}(6_{\{1,2,3\}}, 4)$ is isomorphic to ${\rm ECN}(6_{\{1,2\}}, 4).$

From these discussions, we have characterized $\mathcal{P}$-positions in six-pile {\sc extended circular nim} except for ${\rm ECN}(6_{\{1\}}, 2), {\rm ECN}(6_{\{1,3\}}, 3), {\rm ECN}(6_{\{2,3\}}, 2), $ and $ {\rm ECN}(6_{\{1,2,3\}}, 3)$. Table \ref{matome3} summarizes these results.
%\begin{theorem}
%拡張円形ニム${\rm ECN}(6_{1,2,3}, 4)$の局面$M = (m_1, m_2, m_3, m_4, m_5, m_6)$を考える．このとき，$\mathcal{P}$局面の集合$S$は，$S = \{M \mid m_1 \oplus m_2 \oplus m_3 = m_4 \oplus m_5 \oplus m_6 = 0\}$となる．
%\end{theorem}

\begin{table}[H]
\begin{center}
\begin{tabular}{c|c}
Ruleset & Result \\ \hline
${\rm ECN}(6_{\{1\}}, 2)$ & Unsolved \\
${\rm ECN}(6_{\{1\}}, 3)$ & Shown in \cite{Hor10, DH13} \\
${\rm ECN}(6_{\{1\}}, 4)$ & Shown in \cite{DH13} \\
${\rm ECN}(6_{\{2\}}, 2)$ & Disjunctive sum of ${\rm MN}(3,2)$ \\
${\rm ECN}(6_{\{2\}}, i)(3 \leq i \leq 4)$ & Regarded as two-pile {\sc nim} \\
${\rm ECN}(6_{\{3\}}, i)(2 \leq i \leq 4)$ & Regarded as three-pile {\sc nim} \\
${\rm ECN}(6_{\{1,2\}}, 2)$ & Theorem \ref{gcn6122} \\
${\rm ECN}(6_{\{1,2\}}, 3)$ & Theorem \ref{gcn6123} \\
${\rm ECN}(6_{\{1,2\}}, 4)$ & Theorem \ref{gcn6124} \\
${\rm ECN}(6_{\{1,3\}}, 2)$ & Corollary \ref{gcn6132} \\
${\rm ECN}(6_{\{1,3\}}, 3)$ & Unsolved \\
${\rm ECN}(6_{\{1,3\}}, 4)$ & Isomorphic to ${\rm ECN}(6_{\{1\}}, 4)$ \\
${\rm ECN}(6_{\{2,3\}}, 2)$ & Unsolved \\
${\rm ECN}(6_{\{2,3\}}, 3)$ & Theorem \ref{gcn6233} \\
${\rm ECN}(6_{\{2,3\}}, 4)$ & Isomorphic to ${\rm ECN}(6_{\{2,3\}}, 3)$ \\
${\rm ECN}(6_{\{1,2,3\}}, 2)$ & Isomorphic to ${\rm MN}(6,2)$ \\
${\rm ECN}(6_{\{1,2,3\}}, 3)$ & Unsolved \\
${\rm ECN}(6_{\{1,2,3\}}, 4)$ & Isomorphic to ${\rm ECN}(6_{\{1,2\}},4)$ \\ \hline
\end{tabular}
\caption{Results of {\sc extended circular nim} with six piles}
\label{matome3}
\end{center}
\end{table}
\section{{\sc Extended circular nim} with seven piles}
\label{sec_ecn7}
In this section, we consider {\sc extended circular nim} with seven piles.

As mentioned above, {\sc circular nim} with seven piles, or {\sc extended circular nim} ${\rm ECN}(7_{\{1\}}, i)$ is solved when $i \in  \{1, 4, 6, 7\}$.

\begin{observation}
{\sc Extended circular nim} ${\rm ECN}(7_{\{2\}}, i)$ and ${\rm ECN}(7_{\{3\}}, i) (1\leq i \leq 7)$ are isomorphic to ${\rm ECN}(7_{\{1\}},i)$.

Also, ${\rm ECN}(7_{\{2,3\}}, i)$ and ${\rm ECN}(7_{\{1,3\}}, i)(1\leq i \leq 7)$ are isomorphic to ${\rm ECN}(7_{\{1,2\}}, i)$. 
\end{observation}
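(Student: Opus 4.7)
The plan is to construct an explicit pile relabelling for each claimed isomorphism and verify that it preserves the simplicial complex of legal faces. Since $7$ is prime, every $s \in \{1,\ldots,6\}$ is invertible modulo $7$, so I will consider the bijection $\phi_s \colon v_i \mapsto w_{s^{-1} i \bmod 7}$ on the pile set (equivalently, $\phi_s$ acts on positions by $(n_0, \ldots, n_6) \mapsto (n_0, n_s, n_{2s}, \ldots, n_{6s})$ with indices mod $7$). A direct index computation will show that $\phi_s$ sends a step-$t$ face $\{v_i, v_{i+t}, \ldots, v_{i+jt}\}$ to the set $\{w_{s^{-1} i}, w_{s^{-1} i + s^{-1} t}, \ldots, w_{s^{-1} i + j s^{-1} t}\}$, which is a step-$(s^{-1} t \bmod 7)$ face in the $w$-labelled circle.

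Alongside this I will use a reversal equivalence: the face $\{v_i, v_{i+t}, \ldots, v_{i+jt}\}$ coincides as a subset of $V$ with $\{v_{i+jt}, v_{i+jt-t}, \ldots, v_i\}$, which is a step-$(7-t)$ face. Hence step $t$ and step $7-t$ generate identical collections of legal faces, so either may be substituted for the other in $S$ without changing the ruleset.

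With these two tools the remainder is arithmetic in $\mathbb{Z}/7\mathbb{Z}$. For the first sentence, $\phi_2$ carries ${\rm ECN}(7_{\{2\}}, i)$ to ${\rm ECN}(7_{\{2^{-1}\cdot 2\}}, i) = {\rm ECN}(7_{\{1\}}, i)$, and $\phi_3$ does the analogous thing for ${\rm ECN}(7_{\{3\}}, i)$. For the second sentence, using $2^{-1} = 4$ and $3^{-1} = 5$ in $\mathbb{Z}/7\mathbb{Z}$, I compute that $\phi_2$ sends $\{2,3\}$ to $\{1,5\}$ and $\phi_3$ sends $\{1,3\}$ to $\{5,1\}$; in each case the reversal equivalence rewrites $5$ as $7-5 = 2$, yielding step set $\{1,2\}$ and hence the target ruleset ${\rm ECN}(7_{\{1,2\}}, i)$.

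I do not anticipate any substantive obstacle. The one point that wants a moment of care is the reversal equivalence, which relies on faces being unordered sets rather than ordered sequences; but this is immediate from the definition of the simplicial complex given earlier in the paper.
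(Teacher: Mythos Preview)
Your proposal is correct and follows the same idea as the paper's proof: exhibit a pile relabelling that carries one simplicial complex to the other. The paper's own argument is a one-line sketch (``by properly exchanging the name of the piles'') supported by a figure for the $\{1,2\}$--$\{1,3\}$ case, whereas you make the relabelling explicit as multiplication by $s^{-1}$ in $\mathbb{Z}/7\mathbb{Z}$ together with the step-$t$/step-$(7-t)$ reversal equivalence; this is the same mechanism, just written out systematically.
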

\begin{proof}
By properly exchanging the name of the piles, we can easily show the isomorphism of rulesets.

Figure \ref{7doukei} shows the case of  ${\rm ECN}(7_{\{1,2\}}, i)$ and  ${\rm ECN}(7_{\{1,3\}},i)$.
\end{proof}

\begin{figure}[tb]
\centering
\includegraphics[width = 8cm]{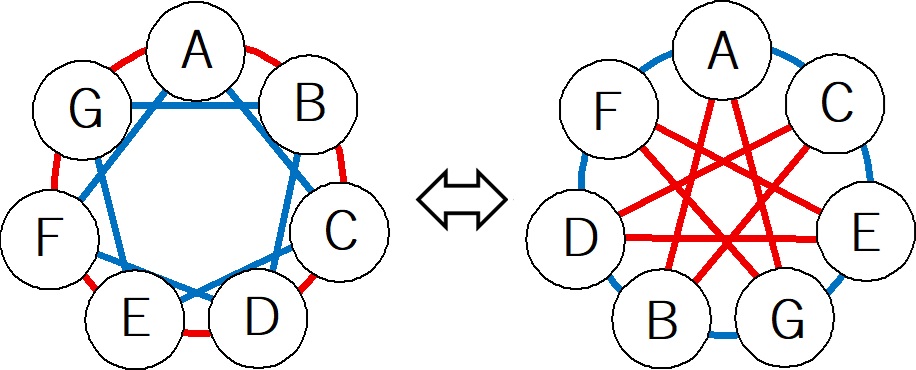}
\caption{${\rm ECN}(7_{\{1,2\}}, i)$ and ${\rm ECN}(7_{\{1,3\}},i)$ are isomorphic}
\label{7doukei}
\end{figure}

Therefore, without loss of generality, we only consider ${\rm ECN}(7_{\{1,2\}}, i), $ and $ {\rm ECN}(7_{\{1,2,3\}}, i)(2\leq i \leq 5)$.

\begin{theorem}
\label{gcn7124}
Consider a position $M = (n_0, n_1, n_2, n_3, n_4, n_5, n_6)$ in ${\rm ECN}(7_{\{1,2\}}, 4)$.
Then, $M$ is a $\mathcal{P}$-position if and only if $M \in_\circlearrowleft P = \{ (n_0, n_1, n_2, n_3, n_4, n_5, n_6) \mid  n_0 = n_1 = n_4 = \min(\{n_i\}), n_2 = n_6, n_3 + n_5 = n_0 + n_2\}$. 

\end{theorem}
\begin{proof}
Let 
$N = \{M = (n_0, n_1, n_2, n_3, n_4, n_5, n_6) \mid M \not \in_\circlearrowleft P\}$.
The terminal position is in $P$.

%Next, consider a position $M = (0, 0, b, c, 0, d, b)(c + d = b) \in P$. The player cannot remove all tokens. For $M' = (0, 0, b', c', 0, d', b) (b' + c' + d' < b + c + d, b' \leq b, c' \leq c, d' \leq d)$, we can say that $M' \not \in_\circlearrowleft P$ because if $b = b'$ then $c' + d' < b$, if $b = c'$ then $c' = c = b, d = d' = 0, b' + 0 = b' < b, $  and if $b' = d'$  then $b + 0 < b'$ or $b + 0 = b' = d', c=c' = 0$. For $M' = (0, 0, b, c', 0, d', b')(b'+c'+d' < b+c+d, b' \leq b, c' \leq c, d' \leq d)$, we can say that $M' \not \in_\circlearrowleft P$ in a similar way.
%Also, for $M' = (0, 0, b', c, d, b'')(b'   + b'' < 2b , b', b'' \leq b ), $ we can say that $M' \not \in_\circlearrowleft P$ because if $b' = b''$ then $c + d = b> b',$  if $b' = d$ then $c > 0$ or $b'' < b$, and if $b'' =c$ then $d > 0$ or $b' < b$.

Next, consider a position $M \in_\circlearrowleft P$. Without loss of generality, we assume that $M = (n_0, n_1, n_2, n_3, n_4, n_5, n_6) = (a, a, a+b, a+c, a, a+d, a+b) \in P,$ where $a, b, c, d$ are nonnegative integers and $c + d = b$. 
%The player cannot remove all tokens from $n_2,n_3,n_5,n_6$. 
Let $M' = (n'_0, n'_1, n'_2, n'_3, n'_4, n'_5, n'_6)$ be an option of $M$. Assume that $\min(\{n'_i\}) = n_0 = a$. Then, $n'_0 = n'_1 = n'_4 = a$ holds.
Note that $\{v_2, v_3, v_5, v_6\}, \{v_2, v_3, v_6\},$ and  $\{v_2, v_5, v_6\}$ are not faces, but every other subset of $\{v_2, v_3, v_5, v_6\}$ is a face in this ruleset. There are three cases.

\begin{itemize}
\item[(i)]
When $M' = (a, a, a+b', a+c', a, a+d', a+b) (b' + c' + d' < b + c + d, b' \leq b, c' \leq c, d' \leq d)$, we can say that $M' \not \in_\circlearrowleft P$, because if $M' \in_\circlearrowleft P,$ one of $a+b' = a+b, a+c' = a+b, $ and $a + b' = a+d'$ holds, but it cannot happen by the following discussion: 

\begin{itemize}
    \item [(i-a)] When $b = b', c' + d' < b$ and we have $M \not \in_\circlearrowleft P$.
    \item [(i-b)] When $c' = b,$ since $c + d = b,$ we have $c' = c  =b$ and $d = d' = 0$. Since $b'+c'+d' < b+c+d,$ we also have $b'<b$. Then, $M' = (a, a, a+b', a+b, a, a, a+b)$ and we consider if a rotated position $(a, a, a+b, a,a, a+b', a+b) \in P$. However, since $0+b' <  b, (a, a, a+b, a,a, a+b', a+b) \not \in P$ holds. Therefore, $M' \not \in_\circlearrowleft P$. 
    \item [(i-c)] When $b = b' = d',$ since $c + d = b,$ we have $d = d' = b$ and $c = c' = 0$. However, it does not satisfy $b' + c' + d' < b+ c+ d$.
    \item [(i-d)] When $b > b' = d', M = (a, a, a+b', a+c', a, a+b', a+b).$ If a rotated position $(a+c', a, a+b', a+b, a,a, a+b') \in P$, then
    $b + 0 = b'$ holds, which is a contradiction. Also, if another rotated position $(a, a+b', a+b, a, a, a+b', a+c') \in P$, then $b' = 0, b = c',$ and $0 + 0 = b$ holds, which is a contradiction. It is easy to confirm that every other rotated and flipped positions are not elements of $P$.  
\end{itemize}

\item[(ii)]
For $M' = (a, a, a+b, a+c', a, a+d', a+b')(b'+c'+d' < b+c+d, b' \leq b, c' \leq c, d' \leq d)$, we can say that $M' \not \in_\circlearrowleft P$ in a similar way.

\item[(iii)]
for $M' = (a, a, a+b', a+c,a, a+d, a+b'')(b'   + b'' < 2b , b', b'' \leq b ), $ we can say that $M' \not \in_\circlearrowleft P$ because if $b' = b''$ then $c + d = b> b',$  if $b' = d$ then $c > 0$ or $b'' < b$, and if $b'' =c$ then $d > 0$ or $b' < b$.

\end{itemize}

Assume that $\min(\{n'_i\}) < n_0$. If $M' \in_\circlearrowleft P$, then there exists $i$ such that $n'_i = n'_{((i+1) \bmod 7)} = n'_{((i+4) \bmod 7)} < n_0$, but selecting these three piles in one move is not legal in ${\rm ECN}(7_{\{1,2\}},4)$. Therefore, $M' \in N$.

%Also, for any position $M \not \in P$ but $M \in_\circlearrowleft P$, we can show that every option $M' \in N$ in a similar way.

Finally, consider a position $M = (n_0, n_1, n_2, n_3, n_4, n_5, n_6) \in N$. Without loss of generality, we assume $n_0  = \min(\{n_i\})$. Then $M = (a, a + a_1, a+a_2, a+a_3, a+a_4, a+a_5 , a+a_6)$, where $a_i \ge 0$ for every $i$.

Without loss of generality, we consider the case of $a_2 \geq a_5$.
\begin{itemize}
\item[(i)]

Assume that $a_2 \geq a_5, a_6 > a_5$.
\begin{itemize}
\item[(i-a)]
If $a_2 \geq a_6, a_3 + a_5 \geq a_6$, then $M' = (a, a, a+ a_6, a + a_6 - a_5, a, a + a_5, a  + a_6)$ is an option of $M$ and $M' \in P$.
\item[(i-b)]
If $a_2 \geq a_6, a_3 + a_5 < a_6$, then $M' = (a, a, a+a_3 +a_5, a+a_3, a, a+a_5, a+a_3 + a_5)$ is an option of $M$ and $M' \in P$. 
\item[(i-c)]
If $a_2 < a_6, a_3 + a_5 \geq a_2$, then $M'=(a, a, a+a_2, a+a_2 - a_5, a, a+a_5, a+a_2)$ is an option of $M$ and $M' \in P$.
\item[(i-d)]
If $a_2 < a_6, a_3 + a_5 < a_2$, then $(a, a, a+a_3 + a_5, a+a_3, a, a+a_5, a+a_3  +a_5)$ is an option of $M$ and $M' \in P$.
\end{itemize}
\item[(ii)]
If $a_2 \geq a_5, a_6 \leq a_5, a_1 + a_6 \geq a_5$, then $M'=(a, a+a_5 - a_6, a+a_5, a, a, a+a_5, a+a_6)$ is an option of $M$ and $M' \in_\circlearrowleft P$.
\item[(iii)]
If $a_2 \geq a_5, a_6 \leq a_5, a_1 + a_6 < a_5$, then $M'= (a, a+a_1, a+a_1 + a_6, a, a, a+a_1 + a_6, a+a_6)$ is an option of $M$ and $M' \in_\circlearrowleft P$.

\end{itemize}

%For the case of $a_5 < a_2$, we can have a similar discussion. 
Therefore, $M \in N$ has an option $M' \in_\circlearrowleft P$.

\end{proof}

For ${\rm ECN}(7_{\{1,2\}}, 5)$, we show the following more generalized theorem.

\begin{theorem}
\label{thm_ecn2m12m1}
Let $m (>1)$ be an integer and assume that $2m+1$ is a prime number.
Consider a position $M = (n_0, n_1, \ldots, n_{2m})$ in ${\rm ECN}((2m+1)_{\{1, 2, \ldots, m - 1\}}, 2m-1)$. $M$ is a $\mathcal{P}$-position if and only if $M \in_\circlearrowleft P = \{M \mid n_0 = 0, n_1 = n_2 = \cdots = n_{m-1} = n_{m}+n_{m+1} = n_{m+2} = \cdots = n_{2m}\}$.
\end{theorem}

\begin{proof}

Let $N = \{M=(n_0, n_1, \ldots, n_{2m})\mid M \not \in_\circlearrowleft P\}$. The terminal position is in $P$.

Next, consider a position $M \in_\circlearrowleft P$. Without loss of generality, we assume that $M = (n_0, n_1, \ldots, n_{2m}) \in P$.
From the definition of $P$, for any position $M \in P$, $(2m-1) \times \max(\{n_i\}) = \sum{n_i}$ holds.

Let $M' = (n'_0, n'_1, \ldots, n'_{2m})$ be an option of $M$. If $n'_i = n_i$ for $i \in \{1, 2, \ldots, m-1\} \cup \{m+2, \ldots, 2m\}$, then $\max(\{n_i\}) = \max(\{n'_i\})$, but $\sum{n'_i} < \sum{n_i}$.  
Therefore, $(2m - 1) \times \max(\{n'_i\}) > \sum{n'_i}$ and $M' \not \in_\circlearrowleft P$.

If $n'_i < n_i$ for every $i\in \{1, \ldots, m-1\} \cup \{m+2, \ldots, 2m\}$, 
then $n'_m = n_m$ and $n'_{m+1} = n_{m+1}$ hold.

Since $n'_m + n'_{m+1} = n_m + n_{m+1} = n_1 > n'_1$, we do not have $n'_0 = 0, n'_1 = \cdots = n'_{m-1} = n'_m + n'_{m+1} = n'_{m+2}= \cdots = n'_{2m}$.

Also, we do not have $n'_i = 0, n'_{(i+1) \bmod (2m+1)} = \cdots = n'_{(i + m-1) \bmod (2m+1)} = n'_{(i+m) \bmod (2m+1)} + n'_{(i + m + 1) \bmod (2m+1)} = n'_{(i + m + 2) \bmod (2m+1)} = \cdots =  n'_{(i + 2m) \bmod (2m + 1)}$ for $i \in \{1, \ldots, m-1\} \cup \{m+2, \ldots, 2m\}$, since in this case, we have $n'_0 \ge n'_m$ and $n'_0 \ge n'_{m+1}$, which contradicts $n'_0 = n_0 = 0$ and $n'_{m}+ n'_{m+1} > 0$.

The case $n'_m = 0, n'_{m+1} = n'_{m+2} = \cdots = n'_{2m} + n'_0 = n'_1 = \cdots = n'_{m-1}$
also contradicts because $n_{m+1}'=n'_m + n'_{m+1} = n_m + n_{m+1} = n_{m+2}>n_{m+2}'$. In a similar way, $n'_{m+1} = 0, n'_{m+2} = n'_{m+3}= \cdots = n'_{2m} =  n'_0 + n'_1 = n'_2 = \cdots = n'_m$ contradicts, too.

Therefore, for $M\in P,$ every option $M' \in N$.

%Also, for any position $M \not \in P$ but $M \in_\circlearrowleft P$, we can show that every option $M' \in N$ in a similar way.

Finally, consider a position $M = (n_0, n_1,\ldots, n_{2m}) \not \in N$.

Note that except for $V \setminus \{v_i, v_{(i + m) \bmod (2m+1)}\} (0 \le i \le 2m),$ every set of five vertices can be a face of this ruleset since $2m + 1$ is a prime number.
%対面的な２つを除き、任意の２つを除いた５つを選ぶことができる．

Without loss of generality, we assume that $\min(\{n_i\}) = n_0$. We consider two cases: 

\begin{itemize}
\item[(i)] Without loss of generality, we assume that $ \min(\{n_1, n_2, \ldots, n_{2m}\}) = n_i(i \in \{1, 2, \ldots, m - 1\})$ or $\min(\{n_1, n_2, \ldots, n_{2m}\}) = n_{m}$, and we consider the first case.  
%$n_2, n_3, n_5$を減らして$n_1$にする．$n_6$を減らして$n_1 - n_0$にする．$n_4$を減らして$0$にする．こうすれば$S$に属する局面に変えられる．
Let $n'_0 = n_0, n'_m = 0, n'_{2m} = n_i - n_0, $ and $n'_j = n_i$ for any $j \in \{1, 2, \ldots, m-1, m+1, \ldots, 2m-1\}$.

Then, $M' = (n'_0, n'_1, \ldots, n'_{2m})$ is an option of $M$ and $M' \in_\circlearrowleft P$. 

\item[(ii)] For the second case, we assume that $ \min(\{n_1, n_2, \ldots, n_{2m}\}) = n_m$.
%\begin{itemize}
%\item[(iii-1)]$\min(n_1,n_2,n_4,n_5,n_6) = n_1$の場合．
%$M' = (0, n_1, n_1, n_3, n_1 - n_3, n_1, n_1)$は$M$の次局面であって$M' \in S$である．
%\item[(iii-2)]$\min(n_1, n_2, n_4, n_5, n_6) = n_2$の場合．
%$M' = (0, n_2, n_2, n_3, n_2 - n_3, n_2, n_2)$は$M$の次局面であって$M' \in S$である．
%\item[(iii-3)]$\min(n_1, n_2, n_4, n_5, n_6) = n_5$の場合．
%$n_0$を$0$に，$n_1, n_2, n_6$を$n_5$に，$n_4$を$n_5 - n_3$に変えられる．

%\item[(iii-4)]$\min(n_1, n_2, n_4, n_5, n_6) = n_6$の場合．
%$n_0$を$0$に，$n_1, n_2, n_5$を$n_6$に，$n_4$を$n_6 - n_3$に変えられる．
\begin{itemize}
\item[(ii-a)]
If $\min(\{n_1,n_2, \ldots, n_{2m}\}\setminus\{n_m\}) = n_i(i \in \{1, 2, \ldots, 2m-1\} \setminus \{m, m+1\})$, then let $n'_0 = 0, n'_m = n_m, n'_{m+1} = n_i - n_m$ and $n'_j = n_i$ for any $j \in \{1, 2, \ldots, 2m+1\} \setminus \{0, m, m+1\}$. Then, $M' = (n'_0, n'_1, \ldots, n'_{2m})$ is an option of $M$ and $M' \in P$.

\item[(ii-b)] If $\min(\{n_1, n_2, \ldots, n_{2m}\}\setminus \{n_m\}) = n_{2m}$ and $n_0 + n_{2m} \leq \min(\{n_1, n_2, \ldots, n_{2m-1}\}\setminus \{n_m\})$, then let $n'_0 = n_0, n'_m = 0, n'_{2m} = n_{2m}$ and $n'_j = n_0 + n_{2m}$ for any $j \in \{1, \ldots, 2m - 1\} \setminus \{m\}$.
Then, $M' = (n'_0, n'_1, \ldots, n'_{2m})$ is an option of $M$ and $M' \in_\circlearrowleft P$. 

\item[(ii-c)]
Consider the case that $\min(\{n_1, n_2, \ldots, n_{2m}\} \setminus\{n_m\}) = n_{2m}$ and $n_0 + n_{2m} > \min(\{n_1, n_2, \ldots, n_{2m-1}\}\setminus \{n_m\})$. Let $a = \min(\{n_1, n_2, \ldots, n_{2m-1}\}\setminus \{n_m\})$. $n_0 \leq a, n_{2m} \leq a,$ and $ n_0 + n_{2m} > a$ hold.
\begin{itemize}
\item[(ii-c-1)]
If $n_i = a$ for $i \in \{1, 2, \ldots, 2m-1\} \setminus \{m, m+1\},$ then let $n'_0 = n_0, n'_m= 0, n'_{2m} = a - n_0$, and $n'_j = a$ for $j \in \{1, 2, \ldots, 2m-1\} \setminus \{m\}.$ Then, $M' = (n'_0, n'_1, \ldots, n'_{2m})$ is an option of $M$ and $M' \in_\circlearrowleft P$.
\item[(ii-c-2)]
If $n_{m+1} = a$, then let $n'_0 = a - n_{2m}, n'_m = 0, n'_{2m} =n_{2m},$ and $n'_j = a$ for any $j \in \{1, 2, \ldots, 2m - 1\} \setminus \{m\}$. Then, $M' = (n'_0, n'_1, \ldots,  n'_{2m})$ is an option of $M$ and $M' \in_\circlearrowleft P$.
\end{itemize}
\item[(ii-d)]If $\min(\{n_1, n_2, \ldots, n_{2m}\} \setminus \{n_m\}) = n_{m+1}$ and 
$n_m + n_{m+1} \leq \min(\{n_1, n_2, \ldots, n_{2m}\}\setminus \{n_m, n_{m+1}\})$, then let $n'_0 = 0, n'_m = n'_m, n'_{m+1} = n'_{m+1},$ and $n'_j = n_m + n_{m+1}$ for $j \in \{1, 2, \ldots, 2m\} \setminus \{m, m+1\}.$ Then, $M' = (n'_0, n'_1, \ldots, n'_{2m})$ is an option of $M$ and $M' \in P$.

\item[(ii-e)]
Consider the case that $\min(\{n_1, n_2, \ldots, n_{2m}\}) = n_{m+1}$ and $n_m + n_{m+1} > \min(\{n_1, n_2, \ldots, n_{2m}\}\setminus\{n_m,n_{m+1}\})$. Let $a = \min(\{n_1, n_2, \ldots, n_{2m}\}\setminus \{n_m, n_{m+1}\})$. $n_m \leq a, n_{m+1} \leq a$, and $n_m + n_{m+1} > a$ hold.
\begin{itemize}
\item[(ii-e-1)]If $n_i = a$ for $i \in \{2, 3, \ldots, 2m\} \setminus \{m, m+1\},$ then let $n'_0 = 0, n'_m = a - n_{m+1}, n'_{m + 1} = n_{m+1},$ and $n'_j = a$ for $j \in \{1, 2, \ldots, 2m\} \setminus \{m, m+1\}$. Then, $M' = (n'_0, n'_1, \ldots, n'_{2m})$ is an option of $M$ and $M' \in P$.
\item[(ii-e-2)]
If $n_1 = a$, then let $n'_0 = 0, n'_m =n_m, n'_{m+1} = a - n_m, $ and $n'_j = a$ for any $j \in \{ 1, 2, \ldots, 2m\} \setminus \{m, m+1\}.$ Then, $M' = (n'_0, n'_1, \ldots, n'_{2m})$ is an option of $M$ and $M' \in P$.
\end{itemize}
\end{itemize}
%For the case of $ \min(\{n_1, n_2, n_3, n_4, n_5, n_6\}) = n_4$, we have an option $M' \in_\circlearrowleft P$ in a similar way.
\end{itemize}

\end{proof}

\begin{corollary}
\label{gcn7125}
Consider a position $M = (n_0, n_1, n_2, n_3, n_4, n_5, n_6)$ in ${\rm ECN}(7_{\{1,2\}}, 5)$. $M$ is a $\mathcal{P}$-position  if and only if $M \in_\circlearrowleft P = \{(n_0, n_1, n_2, n_3, n_4, n_5, n_6) \mid  n_0 = 0, n_1 = n_2 = n_3 + n_4 = n_5 = n_6\}$.
\end{corollary}

Note that Theorem \ref{thm_ecn2m12m1} is also a generalization of  the result on ${\rm ECN}(5_{\{1\}}, 3)$. 

\begin{observation}
{\sc Extended circular nim} ${\rm ECN}(7_{\{1,2,3\}}, 2)$ is isomorphic to {\sc Moore's nim} ${\rm MN}(7, 2)$.
Also, {\sc Extended circular nim} ${\rm ECN}(7_{\{1,2,3\}}, 2)$ is isomorphic to ${\rm MN}(7,5)$.
\end{observation}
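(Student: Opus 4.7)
The plan is to recognize both claims as instances of the same phenomenon: on $7$ piles with $S = \{1,2,3\}$, the set of legal pile-selections in ${\rm ECN}(7_{\{1,2,3\}}, k)$ coincides with the set of all subsets of piles of size at most $k$, which is exactly the move set of ${\rm MN}(7, k)$. The identity map on position tuples $(n_0, \ldots, n_6)$ then supplies the ruleset isomorphism in each case. The inclusion ``every legal selection is a subset of size $\le k$'' is immediate from the definitions; everything therefore reduces to showing the converse realizability statement. (I read the second clause as ${\rm ECN}(7_{\{1,2,3\}}, 5) \cong {\rm MN}(7, 5)$, since the statement as displayed repeats ${\rm ECN}(7_{\{1,2,3\}}, 2)$ on both sides.)

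The key arithmetic input is that $7$ is prime, so every $s \in \{1,2,3\}$ is coprime to $7$ and arithmetic progressions modulo $7$ with step $s$ visit all seven residues. For $k = 2$: given any two distinct piles $v_i, v_j$, their circular distance $d = \min(|i-j|,\, 7 - |i-j|)$ lies in $\{1,2,3\} = S$, and $\{v_i, v_j\}$ is then precisely the length-$2$ progression with step $d$ starting at the appropriate endpoint. Singletons are legal via any $s$.

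For $k = 5$ I would pass to complements. The length-$5$ progression $\{i, i+s, i+2s, i+3s, i+4s\} \pmod 7$ has complement $\{i+5s, i+6s\} \pmod 7$, an unordered pair whose circular distance is exactly $s$. As $s$ varies over $\{1,2,3\}$ and $i$ over $\{0,\ldots,6\}$, the complementary pair ranges over every unordered pair of residues, so every $5$-subset of piles arises as a legal selection. Selections of size strictly less than $5$ are obtained by truncating the relevant progression.

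The argument carries no real obstacle; the only point to be careful about is that every unordered pair of piles must have circular distance in $S$. This holds here precisely because $\lfloor 7/2 \rfloor = 3$ and $S = \{1,2,3\}$ contains all admissible distances. Any weakening of $S$, or replacement of $m = 7$ by a value for which some step fails to be coprime to $m$ (in particular an even $m$, where the maximal circular distance is $m/2$ and arithmetic progressions no longer cover all subsets uniformly), would call for a considerably more delicate analysis.
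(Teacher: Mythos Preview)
Your proof is correct and follows the same approach as the paper, which simply asserts that in ${\rm ECN}(7_{\{1,2,3\}}, 2)$ the player can choose any $2$ piles and in ${\rm ECN}(7_{\{1,2,3\}}, 5)$ any $5$ piles; you supply the missing justification (the circular-distance observation for $k=2$ and the complement argument for $k=5$) and correctly diagnose the typo in the second clause. Your handling of selections of size $<5$ via containment in a $5$-progression is exactly what the paper leaves implicit in the simplicial framework.
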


\begin{proof}
In ${\rm ECN}(7_{\{1,2,3\}}, 2)$, the player can choose any $2$ piles. Also, in ${\rm ECN}(7_{\{1,2,3\}}, 5)$, the player can choose any $5$ piles.
\end{proof}

From these discussions, we have characterized $\mathcal{P}$-positions in seven-pile {\sc extended circular nim} except for ${\rm ECN}(7_{\{1\}},2), {\rm ECN}(7_{\{1\}},3), {\rm ECN}(7_{\{1\}},5), {\rm ECN}(7_{\{1,2\}},2), \allowbreak {\rm ECN}(7_{\{1,2\}},3), {\rm ECN}(7_{\{1,2,3\}},3), {\rm ECN}(7_{\{1,2,3\}},4)$, and those which are isomorphic to them. Table \ref{matome4} summarizes these results.

\begin{table}[H]
\begin{center}
\begin{tabular}{c|c}
Ruleset & Result \\ \hline
${\rm ECN}(7_{\{1\}}, i)(2\leq i \leq 3)$ & Unsolved \\
%${\rm ECN}(7_{1}, 3)$ & 未解決 \\
${\rm ECN}(7_{\{1\}}, 4)$ & Shown in \cite{DHV22} \\
${\rm ECN}(7_{\{1\}}, 5)$ & Unsolved \\
${\rm ECN}(7_{\{2\}}, i)(2 \leq i \leq 5)$ & Isomorphic to ${\rm ECN}(7_{\{1\}}, i)$ \\
${\rm ECN}(7_{\{3\}}, i)(2 \leq i \leq 5)$ & Isomorphic to ${\rm ECN}(7_{\{1\}}, i)$ \\
${\rm ECN}(7_{\{1,2\}}, i)(2 \leq i \leq 3)$ & Unsolved \\
%${\rm ECN}(7_{1,2}, 3)$ & 未解決 \\
${\rm ECN}(7_{\{1,2\}}, 4)$ & Theorem \ref{gcn7124} \\
${\rm ECN}(7_{\{1,2\}}, 5)$ & Corollary \ref{gcn7125} \\
${\rm ECN}(7_{\{1,3\}}, i)(2 \leq i \leq 5)$ & Isomorphic to ${\rm ECN}(7_{\{1,2\}}, i)$ \\
${\rm ECN}(7_{\{2,3\}}, i)(2 \leq i \leq 5)$ & Isomorphic to${\rm ECN}(7_{\{1,2\}}, i)$ \\
${\rm ECN}(7_{\{1,2,3\}}, 2)$ & Isomorphic to ${\rm MN}(7, 2)$ \\
${\rm ECN}(7_{\{1,2,3\}}, i)(3 \leq i \leq 4)$ & Unsolved \\
%${\rm ECN}(7_{1,2,3}, 4)$ & 未解決 \\
${\rm ECN}(7_{\{1,2,3\}}, 5)$ & Isomorphic to ${\rm MN}(7,5)$ \\ \hline

\end{tabular}
\caption{Results of {\sc extended circular nim} with seven piles}
\label{matome4}
\end{center}
\end{table}

%なお，添字集合$S$となっている部分は$S \subset \{1,2,\ldots, \lfloor \frac{1}{2} \rfloor\}, |S| > 0$を満たす任意の集合とする．

\section{{\sc Extended circular nim} with eight piles}
\label{sec_ecn8}
For {\sc extended circular nim} with eight piles, many cases are still open problems, but we can characterize $\mathcal{P}$-positions for some cases.

We consider ${\rm ECN}(8_S, i)(2 \leq i \leq 6)$.
As mentioned above, {\sc circular nim} with eight piles, or {\sc extended circular nim} ${\rm ECN}(8_{\{1\}}, i)$ is solved when $i \in  \{1, 6, 7, 8\}$.

When $S = \{2\}, $ the position $M = (n_0, n_1, n_2, n_3, n_4, n_5, n_6, n_7, n_8)$ can be considered as a disjunctive sum of positions $(n_0, n_2, n_4, n_6)$ and $(n_1, n_3, n_5, n_7)$ in {\sc circular nim} ${\rm CN}(4, i)$.
If $i = 2$, $M$ is a disjunctive sum of two positions in ${\rm CN}(4, 2)$, but any closed formula for Sprague-Grundy values of ${\rm CN}(4, 2)$ is unknown. Therefore, it is an open problem to characterize $\mathcal{P}$-positions in ${\rm ECN}(8_{\{2\}}, 2)$. If $i = 3, $ $M$ is a disjunctive sum of two positions in ${\rm MN}(4, 3)$. Therefore, by using Theorem \ref{thm:mooregrundy},  the set of $\mathcal{P}$-positions can be characterized. %The formula for Sprague-Grundy value is shown in \cite{JM80}; thus, this case is solved.
If $i \geq 4, $ $M$ is the disjunctive sum of positions  in ${\rm CN}(4, i)$. In this case, each player can choose all four piles which have  $n_0, n_2, n_4, n_6 $ tokens or all four piles which have $n_1, n_3, n_5, n_7$ tokens. Therefore, this position can be considered as a position $(n_0 + n_2 + n_4 + n_6, n_1 + n_3 + n_5 +n_7)$ in two-pile {\sc nim} and $M$
 is a  $\mathcal{P}$-position if and only if $n_0 + n_2 + n_4 + n_6 = n_1 + n_3 + n_5 +n_7$.

When $S = \{3\},$ ${\rm ECN}(8_{\{3\}}, i)$ is isomorphic to ${\rm ECN}(8_{\{1\}}, i)$. As shown in Figure \ref{81}, we have this isomorphism by exchanging names of piles.

\begin{figure}[tb]
\centering
\includegraphics[width = 5cm]{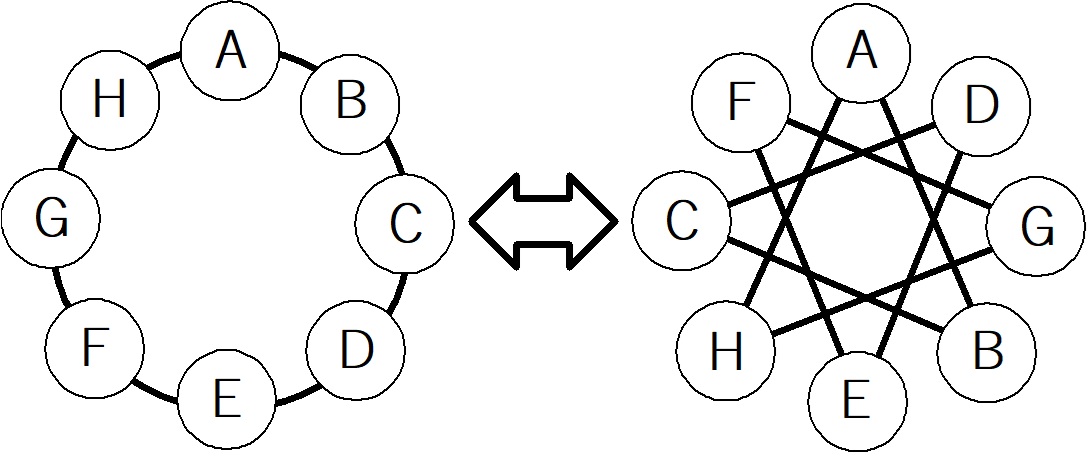}
\caption{${\rm ECN}(8_{\{1\}}, i)$ and ${\rm ECN}(8_{\{3\}}, i)$ are isomorphic}
\label{81}
\end{figure}

%Table \ref{matome4} summarizes these results.

When $S = \{4\}$ and $i \geq 2$, ${\rm ECN} (8_{\{4\}}, i)$ can be considered as a position $(n_0+n_4, n_1 + n_5, n_2 + n_6, n_3 + n_7) $ in  four-pile {\sc nim} and $M$ is a $\mathcal{P}$-position if and only if $(n_0 + n_4) \oplus (n_1 + n_5) \oplus (n_2 + n_6) \oplus (n_3 + n_7) = 0$ .
%したがって$M = (n_0, n_1, n_2, n_3, n_4, n_5, n_6, n_7)$が$\mathcal{P}$局面となるための必要十分条件は$n_0+n_4 \oplus n_1+n_5 \oplus n_2  +n_6 \oplus n_3 + n_7 = 0$である．

When $S = \{1, 2\}, {\rm ECN}(8_{\{1,2\}},i)$ is unsolved for $2 \le i \le 6.$

For ${\rm ECN}(8_{\{1, 3\}}, 2),$ we can  obtain the following corollary from Theorem \ref{thm_2m2}.

\begin{corollary}
\label{thm8132}
Consider a position $M = (n_0, n_1, n_2, n_3, n_4, n_5, n_6, n_7)$ in ${\rm ECN}(8_{\{1,3\}}, 2)$. $M$ is a $\mathcal{P}$-position if and only if $M \in P = \{(n_0, n_1, n_2, n_3, n_4, n_5, n_6, n_7) \mid n_0 \oplus n_2 \oplus n_4 \oplus n_6 = n_1 \oplus n_3 \oplus n_5 \oplus n_7 = 0\}$.
\end{corollary}

%\begin{proof}
%    In ${\rm ECN}(8_{\{1,3\}}, 2)$, the player can reduce at most one of $n_0, n_2, n_4, n_6$ and at most one of $n_1, n_3, n_5, n_7$ in one move. Therefore, this ruleset is the selective sum of two positions $(n_0, n_2, n_4, n_6)$ and $(n_1, n_3, n_5, n_7)$ in four-pile {\sc nim}. Thus, from Theorems \ref{thm_nim} and \ref{thm:selsum}, $M$ is a $\mathcal{P}$-position if and only if $n_0 \oplus n_2 \oplus n_4 \oplus n_6 = n_1 \oplus n_3 \oplus n_5 \oplus n_7 = 0.$
    %such two four-pile {\sc nim} positions are $\mathcal{P}$-positions.  
%\end{proof}

\begin{theorem}
\label{thm8134}
Consider a position $M = (n_0, n_1, n_2, n_3, n_4, n_5, n_6, n_7)$ in ${\rm ECN}(8_{\{1,3\}}, 4)$. $M$ is a $\mathcal{P}$-position if and only if $M \in P = \{(n_0, n_1, n_2, n_3, n_4, n_5, n_6, n_7) \mid n_0 = n_4, n_1 = n_5, n_2 = n_6, n_3 = n_7\}$. 
\end{theorem}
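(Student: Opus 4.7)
The plan is to recognise ${\rm ECN}(8_{\{1,3\}}, 4)$ as a selective sum of four independent two-pile {\sc nim} games, one for each antipodal pair $\{v_i, v_{i+4}\}$ ($i = 0, 1, 2, 3$). Once that identification is made, Theorem~\ref{thm:selsum} immediately yields that $M$ is a $\mathcal{P}$-position if and only if each antipodal pair is a $\mathcal{P}$-position of two-pile {\sc nim}, which is exactly the stated condition $n_0 = n_4$, $n_1 = n_5$, $n_2 = n_6$, $n_3 = n_7$.

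To carry out the identification, I need to show that a non-empty set $T \subseteq \{v_0, \ldots, v_7\}$ of piles can be reduced (each by at least one token) in a single legal move if and only if $T$ contains at most one element from each antipodal pair. I would prove this by enumerating the $16$ maximal faces of $\Delta$. The $s = 1$ family gives the $8$ consecutive arcs $\{v_i, v_{i+1}, v_{i+2}, v_{i+3}\}$, and since $3 \cdot 3 \equiv 1 \pmod 8$ the $s = 3$ family gives the $8$ sets $\{v_i, v_{i+1}, v_{i+3}, v_{i+6}\}$. Because the two elements of an antipodal pair share their residue modulo $4$, a transversal of the antipodal partition is precisely a four-element set whose indices hit each residue class modulo $4$. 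Every consecutive arc has this property, and every $s = 3$ face $\{i, i+1, i+3, i+6\}$ does too, since $6 \equiv 2 \pmod 4$ makes its four residues $\{i, i+1, i+2, i+3\} \pmod 4$. The $16$ maximal faces listed are pairwise distinct (an $s = 3$ face is never four consecutive indices), and the antipodal partition admits exactly $2^4 = 16$ transversals, so the maximal faces are exactly the transversals. A set $T$ with at most one element per antipodal pair therefore extends to a transversal and sits inside some maximal face, so it is a legal move-set; conversely, any legal move-set lies inside a maximal face, which is a transversal, and so contains at most one element per pair.

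Under this equivalence, an ECN move that reduces a non-empty set $T$ of piles corresponds exactly to the selective-sum move that chooses the pairs touched by $T$ and, in each, reduces the unique touched pile by the prescribed amount. This is precisely the move structure of the selective sum of two-pile {\sc nim}s on the four antipodal pairs, so Theorem~\ref{thm:selsum} applies and closes the proof.

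The main obstacle I anticipate is the counting coincidence underlying the move characterization: one must confirm that every transversal of the antipodal partition is actually realised as a maximal face of $\Delta$, not merely that every maximal face is a transversal. Both counts equal $16$, so one also has to check that the $s = 1$ and $s = 3$ families are disjoint (easy, once the two lists are written out). Once this bookkeeping is secured, the argument collapses to a one-line appeal to Theorem~\ref{thm:selsum}, in pleasant contrast to the lengthy case analyses required by neighbouring results such as Theorem~\ref{gcn6233}.
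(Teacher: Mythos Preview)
Your approach is exactly the paper's: identify ${\rm ECN}(8_{\{1,3\}},4)$ as the selective sum of the four two-pile {\sc nim} games on the antipodal pairs and invoke Theorem~\ref{thm:selsum}. The paper asserts the decomposition in one sentence (``the player can reduce at most one of $n_0,n_4$, \ldots''), whereas you supply the full justification by counting the $16$ maximal faces and matching them to the $16$ transversals---a useful expansion, but not a different argument.
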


\begin{proof}
In ${\rm ECN}(8_{\{1,3\}}, 4)$, the player can reduce at most one of $n_0, n_4$, at most one of $n_1, n_5$, at most one of $n_2, n_6,$ and at most one of $n_3, n_7$ in one move. Therefore, this ruleset is the selective sum of four positions $(n_0, n_4), (n_1, n_5), (n_2, n_6),$ and $(n_3, n_7)$ in two-pile {\sc nim}. Thus, from Theorems \ref{thm_nim} and \ref{thm:selsum}, $M$ is a $\mathcal{P}$-position if and only if $n_0 \oplus n_4 = n_1 \oplus n_5 =  n_2 \oplus n_6 = n_3 \oplus n_7 = 0,$ that is, $n_0 = n_4, n_1 = n_5, n_2 = n_6, n_3 = n_7.$

%such four two-pile {\sc nim} positions are $\mathcal{P}$-positions.    
\end{proof}

For ${\rm ECN}(8_{\{1,3\}}, 6),$ we show the following more generalized theorem. 

\begin{theorem}
\label{thm_2m2m2}
Let $m (> 1)$ be an integer and assume that $h = 2^{m-1} - 1$. Consider a position $M = (n_0, n_1, \ldots, n_{2^m-1})$ in ${\rm ECN}((2^m)_{\{1, 3, \ldots, h\}},2^m-2)$. Then, $M$ is a $\mathcal{P}$-position if and only if $M \in P=\{(n_0, n_1, \ldots, n_{2^m-1}) \mid n_0 = n_2 = \cdots = n_{2^m-2} , n_1 = n_3 = \cdots = n_{2^m-1}\}$.
\end{theorem}

\begin{proof}
Note that every odd number is prime to $2^m$.
In ${\rm ECN}((2^m)_{\{1,3,\ldots, h\}}, 2^m-2)$, the player can reduce at most $2^{m-1}-1$ numbers of $n_0, n_2, \ldots, n_{2^m-2}$ and at most $2^{m-1}-1$ numbers of $n_1, n_3, \ldots, n_{2^m-1}$ in one move. Therefore, this ruleset is the selective sum of two positions $(n_0, n_2, \ldots, n_{2^m-2})$ and $ (n_1, n_3, \ldots, n_{2^m-1})$ in ${\rm MN}(2^{m-1}, 2^{m-1}-1)$. Thus, from Theorems \ref{thm_moo} and \ref{thm:selsum}, $M$ is a $\mathcal{P}$-position if and only if $n_0 = n_2 = \cdots = n_{2^m-2}$ and $n_1 = n_3 = \cdots = n_{2^m-1}$.
\end{proof}

\begin{corollary}
\label{thm8136}
Consider a position $M = (n_0, n_1, n_2, n_3, n_4, n_5, n_6, n_7)$ in ${\rm ECN}(8_{\{1,3\}}, 6)$. $M$ is a $\mathcal{P}$-position if and only if $M \in P = \{(n_0, n_1, n_2, n_3, n_4, n_5, n_6, n_7) \mid n_0 = n_2 = n_4 = n_6, n_1 = n_3 = n_5 = n_7\}$. 
\end{corollary}

Note that Theorem \ref{thm_2m2m2} is also a generalization of the result on ${\rm ECN}(4_{\{1\}}, 2)$.

%\begin{proof}
%In ${\rm ECN}(8_{\{1,3\}}, 6)$, the player can reduce at most three of $n_0, n_2, n_4, n_6$ and at most three of $n_1, n_3, n_5, n_7$ in one move. Therefore, this ruleset is the selective sum of two positions $(n_0, n_2, n_4, n_6)$ and $ (n_1, n_3,  n_5, n_7)$ in ${\rm MN}(4, 3)$. Thus, from Theorems \ref{thm_moo} and \ref{thm:selsum}, $M$ is a $\mathcal{P}$-position if and only if $n_0 = n_2 = n_4 = n_6$ and $n_1 = n_3 = n_5 = n_7$.
%such two {\sc Moore's nim} positions are $\mathcal{P}$-positions.    
%\end{proof}

%When $S = \{1\}, {\rm ECN}(8_1, i)$ can be considered as {\sc circular nim} ${\rm CN}(8, i)$. From \cite{DH13}, ${\rm CN}(8, 6)$ is solved as shown in Lemma \ref{}, and other cases are open problems.

When $S = \{1, 4\}$ and $2 \leq i \leq 4$, ${\rm ECN}(8_{\{1, 4\}}, i)$ has not been solved. For the case of $i \geq 5$, ${\rm ECN}(8_{\{1, 4\}}, i)$ is isomorphic to ${\rm ECN}(8_{\{1\}}, i)$.  

When $S = \{2, 3\},$ similar to the case of $S = \{3\}$, by exchanging the names of the piles, we can show that ${\rm ECN}(8_{\{2, 3\}}, i)$ is isomorphic to ${\rm ECN}(8_{\{1, 2\}}, i)$.

When $S = \{2, 4\}$, similar to the case of $S = \{2\}$, the position is a disjunctive sum of two positions $(n_0, n_2, n_4, n_6)$ and $(n_1, n_3, n_5, n_7)$ in ${\rm ECN}(4_{\{1,2\}}, \min(\{4,i\}))$.
If $i = 2$, then ${\rm ECN}(4_{\{1,2\}}, 2)$ is isomorphic to ${\rm MN}(4,2)$, but closed formula for Sprague-Grundy values of this ruleset is not known. Therefore, it is an open problem to characterize $\mathcal{P}$-positions in ${\rm ECN}(8_{\{2, 4\}}, 2).$ If $i \geq 3$, ${\rm ECN}(8_{\{2,4\}}, i)$ is isomorphic to ${\rm ECN}(8_{\{2\}}, i)$.

When $S = \{3, 4\}$, similar to the case of $S = \{3\}$, by exchanging the names of the piles, we can show that ${\rm ECN}(8_{\{3,4\}}, i)$ is isomorphic to ${\rm ECN}(8_{\{1,4\}}, i)$.

When $S = \{1, 2, 3\}$ and $2 \leq i \leq 5$, ${\rm ECN}(8_{\{1, 2, 3\}}, i)$ has not been solved. For the case of $i = 6$, we have the following theorem.

\begin{theorem}
\label{thmecn81236}

Consider a position $M = (n_0, n_1, n_2, n_3, n_4, n_5, n_6, n_7)$ in ${\rm ECN}(8_{\{1,2,3\}}, 6)$. $M$ is a $\mathcal{P}$-position if and only if $M \in_\circlearrowleft P = \{(n_0, n_1, n_2, n_3, n_4, n_5, n_6, n_7) \mid n_0 = n_2 = n_4 = n_6 = n_1 + n_3 + n_5 + n_7 \} \setminus \{(n_0, n_1, n_2, n_3, n_4, n_5, n_6, n_7) \mid n_0 = n_2 = n_4 = n_6 \text{ and } n_1 = n_3 = n_5 = n_7\} \cup \{(0, 0, 0, 0, 0, 0, 0, 0)\}$
\end{theorem}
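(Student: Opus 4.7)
The plan is the standard three-part verification: show $(0,\ldots,0) \in P$, show every move out of $\in_\circlearrowleft P$ exits the set, and show every position outside $\in_\circlearrowleft P$ has a move back in. Setting $S_E = n_0+n_2+n_4+n_6$ and $S_O = n_1+n_3+n_5+n_7$, even rotations preserve the pair $(S_E,S_O)$ while odd rotations swap it, so up to rotation a non-zero element of $\in_\circlearrowleft P$ is exactly a position in which one alternate orbit is constant at some $c > 0$ and the other orbit is non-constant and sums to $c$. A preliminary classification of the generating faces is crucial: the $s=2$ faces are exactly the two parity orbits $\{v_0,v_2,v_4,v_6\}$ and $\{v_1,v_3,v_5,v_7\}$; the $s=1$ six-faces are blocks of six consecutive piles, each containing three evens and three odds; and for $s=3$, since $\gcd(3,8)=1$, the sequence $0,3,6,1,4,7,2,5$ is a Hamiltonian cycle whose six-element initial arcs are complements of pairs $\{v_{i+2},v_{i+5}\}$ of opposite parity, again three evens plus three odds. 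Consequently every $s \in \{1,3\}$ maximal face excludes an even-odd pair at modular distance $1$ or $3$, and since on the $8$-cycle every even-odd pair has that distance property, any pre-chosen even-odd pair can be realized as the excluded pair of some legal six-face.

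For the forward direction, take non-zero $M \in_\circlearrowleft P$ with, WLOG, evens all equal to $c$ and $S_O=c$ and odds non-constant. An $s=2$ move on the even orbit leaves the non-constant odds untouched while breaking constancy of the evens, so neither rotational P-form can reappear. An $s=2$ move on the odd orbit preserves $S_E=4c$ while dropping $S_O$ strictly below $c$, so neither of the ratios $S_E=4S_O$ or $S_O=4S_E$ can hold. An $s \in \{1,3\}$ move leaves at least one even at $c$ and one odd $n_j$ untouched; forcing the new evens constant then requires no even change and $S_O<c$ breaks the sum condition, while forcing the new odds constant at $n_j \le c$ requires $n_j=c$ and an even reduction of $3c$ across at most three piles, which in turn forces every touched odd to have started at $c$ and collapses $M$ to the zero position.

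The real work is the reverse direction. For a symmetric excluded position $(c,d,c,d,c,d,c,d) \ne (0,\ldots,0)$, rotate so $c \ge d$: if $d=0$ the $s=2$ even-orbit move to zero works, and if $d>0$ the $s=2$ even-orbit move $(c,c,c,c) \mapsto (d,0,0,0)$ produces $(d,d,0,d,0,d,0,d)$, which lies in $P$ via its odd-rotation form. For a generic $M \notin_\circlearrowleft P$, rotate so $S_E \ge S_O$. If $\min_i n_{2i} \ge S_O$, the $s=2$ even-orbit move reducing each even to $S_O$ gives a P-position unless the odds are all equal, in which case the $s=2$ even-orbit move to $(d,0,0,0)$ on the evens (with $d=S_O/4$) produces a $(d,d,0,d,0,d,0,d)$-type (A-odd) P-position instead. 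If some even is strictly below $S_O$, set $c^* = \min_i n_{2i}$ and aim for $(c^*,b_1,c^*,b_3,c^*,b_5,c^*,b_7)$ with $\sum_i b_i = c^*$; preserve the minimum-valued even together with one odd pile of value at most $c^*$, reduce the other three evens to $c^*$ and reduce three odds so that the new odd-sum equals $c^*$. The distance-$1$-or-$3$ property of even-odd pairs guarantees that the two preserved piles form the excluded pair of some $s=1$ or $s=3$ six-face, so all six reductions fit in a single move. In the remaining sub-case where every odd strictly exceeds $c^*$, the dually symmetric construction targeting an (A-odd) position with $d^* = \min_i n_{2i+1}$ applies. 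The chief obstacle is this coordination: $s=2$ faces cannot touch both orbits while $s=1$ and $s=3$ faces cannot touch all four piles of a single orbit, so the target constant and the identity of the preserved pile of the other parity must be chosen together to make all required reductions fit in one face while the $b_i$'s stay non-constant to keep the resulting position in $P$ rather than in the subtracted set.
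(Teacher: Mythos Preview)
Your proof is correct and follows the same three-part verification as the paper; your normalization $S_E \ge S_O$ (versus the paper's $\min(\text{evens}) < \min(\text{odds})$) yields a slightly different but equivalent case split in the reverse direction, and your explicit verification that every even--odd pair arises as the complement of some $s\in\{1,3\}$ six-face makes precise what the paper handles with ``similarly''. One minor slip: an $s=2$ move on the even orbit need not ``break constancy of the evens'' (all four could drop by the same amount), but the conclusion survives since the new common value would then be strictly below the unchanged odd-sum $c$, so neither $P$-form is met.
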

\begin{proof}
Let $N = \{M=(n_0, n_1, n_2, n_3, n_4, n_5, n_6)\mid M \not \in_\circlearrowleft P\}$. The terminal position is in $P$.

Consider a non-terminal position $M \in_\circlearrowleft P$. Without loss of generality, we assume that $M = (n_0, n_1, n_2, n_3, n_4, n_5, n_6, n_7) \in P$.

There are three types of moves:
\begin{itemize}
    \item[(i)] Consider the case that the player selects $n_0, n_2, n_4, n_6$ or part of them to reduce.
    
    Let $M' = (n'_0, n_1, n'_2, n_3, n'_4, n_5, n'_6, n_7)(n'_0 + n'_2 + n'_4 + n'_6 < n_0 + n_2 + n_4 + n_6)$ is an option of $M$. If $n'_0 = n'_2 = n'_4 = n'_6$, then $n_1+  n_3 + n_5 + n_7 > n'_0$. Also, since $M \in P$ and $n_0 = n_2 = n_4 = n_6$, we do not have $n_1 = n_3 = n_5 = n_7$. Therefore, $M' \in N$.
\item[(ii)] Consider the case that the player selects  $n_1, n_3, n_5, n_7$ or part of them to reduce. 

Let $M' = (n_0, n'_1, n_2, n'_3, n_4, n'_5, n_6, n'_7)(n'_1 + n'_3 + n'_5 + n'_7 < n_1 + n_3 + n_5 + n_7)$ is an option of $M$. Since $n_0 = n_2 = n_4 = n_6 > n'_1 + n'_3 + n'_5 + n'_7$, $M' \in N$.
    
\item [(iii)]Consider the case that the player selects at least one and at most three of $n_0, n_2, n_4, n_6$ and at least one and at most three of $n_1, n_3, n_5, n_7$ to reduce. 

Let $M' = (n'_0, n'_1, n'_2, n'_3, n'_4, n'_5, n'_6, n'_7)(n'_0 + n'_2 + n'_4 + n'_6 < n_0 + n_2 + n_4 + n_6, n'_1 + n'_3 + n'_5 + n'_7 < n_1 + n_3 + n_5 + n_7)$ is an option of $M$ after such a move. Then, we do not have $n'_0 = n'_2 = n'_4 = n'_6$. Also, since $\max(n'_0, n'_2, n'_4, n'_6) = n_0 = n_1 + n_3 + n_5 + n_7 > n'_1 + n'_3 + n'_5 + n'_7$, we do not have $n'_1 = n'_3 = n'_5 = n'_7 = n'_0 + n'_2 + n'_4 + n'_6$. Therefore, $M'  \in N$. 
\end{itemize}
%Also, for any position $M \not \in P$ but $M \in_\circlearrowleft P$, we can show that every option $M' \in N$ in a similar way.

Finally, consider a position $M =  (n_0, n_1, n_2, n_3, n_4, n_5, n_6, n_7) \in N$.

If $n_0 = n_2 = n_4 = n_6 = n_1 + n_3 + n_5 + n_7$ and $n_1 = n_3 = n_5 = n_7$, then $M' = (n_1, n_1, 0, n_3, 0, n_5, 0, n_7)$ is an option of $M$ and $M' \in_\circlearrowleft P$.

Otherwise, without loss of generality, we assume $\min(\{n_0, n_2, n_4, n_6\}) < \min(\{n_1, n_3, n_5, n_7\})$.
\begin{itemize}
\item[(i)]
Consider the case that  $n_0 + n_2 + n_4 + n_6 \leq \min(\{n_1, n_3, n_5, n_7\})$. If we have $n_0 = n_2 = n_4 = n_6$, $M' = (n_0, n_0, n_2, 0, n_4, 0, n_6, 0)$ is an option of $M$ and $M' \in P$.
If we do not have $n_0 = n_2 = n_4 = n_6$, $M' = (n_0, n_0+n_2+n_4+n_6, n_2, n_0+n_2+n_4+n_6, n_4, n_0+n_2+n_4+n_6, n_6, n_0+n_2+n_4+n_6)$ is an option of $M$ and $M' \in_\circlearrowleft P$.
\item[(ii)]
Consider the case that $n_0 + n_2 + n_4 + n_6 > \min(\{n_1, n_3, n_5, n_7\})$. Without loss of generality, $n_1 = \min(\{n_1, n_3, n_5, n_7\})$. For the case of $n_0 = \min(\{n_0, n_2, n_4, n_6\}),$ since $n_1 > n_0 = \min(\{n_0, n_2, n_4, n_6\})$, we have $(n_0, n'_2, n'_4, n'_6)$ such that $n_0 + n'_2 + n'_4 + n'_6 = n_1, n'_2 \leq n_2, n'_4 \leq n_4, n'_6 \leq n_6,$ and not satisfy $n_0 = n'_2 = n'_4 = n'_6$. Therefore, $M' = (n_0, n_1, n'_2, n_1, n'_4, n_1, n'_6, n_1)$ is an option of $M$ and $M' \in_\circlearrowleft P$. 

Similarly, for the cases of $n_2 = \min(\{n_0, n_2, n_4, n_6\}), n_4 = \min(\{n_0, n_2, n_4, n_6\}),$ and $n_6 = \min(\{n_0, n_2, n_4, n_6\}),$ we have an option $M' \in_\circlearrowleft P$. 
\end{itemize}
\end{proof}
%以上から，$S$は拡張円形ニム${\rm ECN}(8_{1,2,3}, 6)$の$\mathcal{P}$局面の集合となる．

When $S = \{1, 2, 4\}$, ${\rm ECN}(8_{\{1, 2, 4\}}, 2)$ has not been solved. For the case of $3 \leq i \leq 6$, ${\rm ECN}(8_{\{1, 2, 4\}}, i)$ is isomorphic to ${\rm ECN}(8_{\{1,2\}}, i)$.

When $S = \{1, 3, 4\}$ and $2 \leq i \leq 4$, ${\rm ECN}(8_{\{1, 3,4\}}, i)$ has not been solved. For the case of $5 \leq i \leq 6$, ${\rm ECN}(8_{\{1, 3,4\}}, i)$ is isomorphic to ${\rm ECN}(8_{\{1,3\}}, i)$.

When $S = \{2, 3, 4\}$,  similar to the case of $S = \{3\}$, by exchanging the names of the piles, we can show that 
${\rm ECN}(8_{\{2, 3, 4\}}, i)$ is isomorphic to ${\rm ECN}(8_{\{1, 2, 4\}}, i)$.

When $S = \{1, 2, 3, 4\}$, if $i = 2$, ${\rm ECN}(8_{\{1, 2, 3, 4\}}, 2)$ is isomorphic to ${\rm MN}(8, 2)$. For the case of $3 \leq i \leq 6$, ${\rm ECN}(8_{\{1, 2, 3, 4\}}, 2)$ is isomorphic to ${\rm ECN}(8_{\{1, 2, 3\}}, i)$.

Table \ref{matome5} summarizes  results shown in this section.

\begin{table}[H]
\begin{center}
\begin{tabular}{c|c}
Ruleset & Result \\ \hline
%${\rm ECN}(8_{S}, 1)(S \subseteq \{1, 2, 3, 4\})$ & 8山ニムと同型 \\
%${\rm ECN}(8_{S}, 7)(S \subseteq \{1, 2, 3, 4\}, 1 \in S \text{または} 3 \in S)$ & ${\rm MN}(8, 7)$と同型 \\
%${\rm ECN}(8_{S}, 7)(S \subseteq \{1, 2, 3, 4\}, 1 \not \in S \text{かつ} 3 \not \in S)$ & ${\rm ECN}(8_S, 6)$と同型 \\
%${\rm ECN}(8_{S}, 8)(S \subseteq \{1, 2, 3, 4\}, 1 \in S \text{または} 3 \in S)$ & ${\rm MN}(8, 8)$と同型 \\
%${\rm ECN}(8_{S}, 8)(S \subseteq \{1, 2, 3, 4\}, 1 \not \in S \text{かつ} 3 \not \in S)$ & ${\rm ECN}(8_S, 6)$と同型 \\

${\rm ECN}(8_{\{1\}}, i)(2 \leq i \leq 5)$ & Unsolved \\
${\rm ECN}(8_{\{1\}}, 6)$ & Shown in \cite{DH13} \\

${\rm ECN}(8_{\{2\}}, i)(2 \leq i \leq 6)$ & Disjunctive sum of ${\rm CN}(4, \min(\{4, i\}))$ \\

${\rm ECN}(8_{\{3\}}, i)(2 \leq i \leq 6)$ & Isomorphic to ${\rm ECN}(8_{\{1\}}, i)$ \\

${\rm ECN}(8_{\{4\}}, i)(2 \leq i \leq 6)$ & Regarded as four-pile {\sc nim} \\

${\rm ECN}(8_{\{1, 2\}}, i)(2 \leq i \leq 6)$ & Unsolved \\

${\rm ECN}(8_{\{1, 3\}}, 2)$ & Corollary \ref{thm8132} \\
${\rm ECN}(8_{\{1, 3\}}, 3)$ & Unsolved \\
${\rm ECN}(8_{\{1, 3\}}, 4)$ & Theorem \ref{thm8134} \\
${\rm ECN}(8_{\{1, 3\}}, 5)$ & Unsolved \\
${\rm ECN}(8_{\{1, 3\}}, 6)$ & Corollary \ref{thm8136} \\

${\rm ECN}(8_{\{1, 4\}}, i)(2 \leq i \leq 4)$ & Unsolved \\
${\rm ECN}(8_{\{1, 4\}}, i)(5 \leq i \leq 6)$ & Isomorphic to ${\rm ECN}(8_{\{1\}}, i)$ \\

${\rm ECN}(8_{\{2, 3\}}, i)(2 \leq i \leq 6)$ & Isomorphic to ${\rm ECN}(8_{\{1, 2\}}, i)$ \\

${\rm ECN}(8_{\{2, 4\}}, i)(2 \leq i \leq 6)$ & Disjunctive sum of ${\rm ECN}(4_{\{1,2\}}, \min(\{4, i\}))$ \\

${\rm ECN}(8_{\{3, 4\}}, i)(2 \leq i \leq 6)$ & Isomorphic to ${\rm ECN}(8_{\{1, 4\}}, i)$ \\

${\rm ECN}(8_{\{1, 2, 3\}}, i)(2 \leq i \leq 5)$ & Unsolved \\
${\rm ECN}(8_{\{1, 2, 3\}}, 6)$ & Theorem \ref{thmecn81236} \\

${\rm ECN}(8_{\{1, 2, 4\}}, 2)$ & Unsolved \\
${\rm ECN}(8_{\{1, 2, 4\}}, i)(3 \leq i \leq 6)$ & Isomorphic to ${\rm ECN}(8_{\{1, 2\}}, i)$ \\

${\rm ECN}(8_{\{1, 3, 4\}}, i)(2 \leq i \leq 4)$ & Unsolved \\
${\rm ECN}(8_{\{1, 3, 4\}}, i)(5 \leq i \leq 6)$ & Isomorphic to ${\rm ECN}(8_{\{1, 3\}}, i)$ \\

${\rm ECN}(8_{\{2, 3, 4\}}, i)(2 \leq i \leq 6)$ & Isomorphic to ${\rm ECN}(8_{\{1, 2, 4\}}, i)$ \\

${\rm ECN}(8_{\{1, 2, 3, 4\}}, 2)$ & Isomorphic to ${\rm MN}(8, 2)$ \\
${\rm ECN}(8_{\{1, 2, 3, 4\}}, i)(3 \leq i \leq 6)$ & Isomorphic to ${\rm ECN}(8_{\{1, 2, 3\}}, i)$ \\ \hline

\end{tabular}
\caption{Results of {\sc extended circular nim} with eight piles}
\label{matome5}
\end{center}
\end{table}

%\section{Further generalizations}
%\label{sec_further}
%From the results on ${\rm ECN}(5_{\{1\}}, 3)$ and ${\rm ECN}(7_{\{1,2\}}, 5)$, 
%$\mathcal{P}$局面の集合はそれぞれ，$\{M \mid \text{ある}i\text{について} n_i = 0, n_{i+1} = n_{i+2} + n_{i+3}=n_{i+4}\}$と$\{M \mid \text{ある}i \text{について} n_i = 0, n_{i+1} = n_{i+2} = n_{i+3} + n_{i+4} = n_{i+5} = n_{i+6}\}$であった．
%we have a generalization as follows:

%\section{Conclusion}
%\label{conclusion}
%本研究では拡張円形ニムを考えた．
%オリジナルの円形ニムにおいては，パラメータを変えたときの$\mathcal{P}$局面の集合同士の関係性はほとんど見られなかったが，拡張円形ニムとして広くとらえることで新たな$\mathcal{P}$局面の集合同士の関係性をとらえることができた．今後は今回の研究で未解明となっている部分を探るとともに，円形ニムと拡張円形ニムにおけるパラメータを変えたときの$\mathcal{P}$局面同士の関係性についてさらに調査を進めたい．
%一部について既存の結果を用いたり，新しい定理を導いたりすることで解決することができたが，依然として解決できていない場合も多いため，引き続き新しい手法を探りたい．

%\section*{Acknowledgments}


\begin{thebibliography}{99}

%\bibitem{ANW11} Albert M. H., Nowakowski R. J., Wolfe D., 川辺治之訳：組合せゲーム理論　勝利の方程式，	共立出版(2011).
\bibitem{Bou01}
Bouton C.~L.: Nim, a game with a complete mathematical theory, Annals of
  Mathematics,  Vol.~3, No.~1/4, pp.\ 35--39 (1901).

\bibitem{BGHMM19a} Boros E., Gurvich V., Ho N. B., Makino K., and Mursic P.: On the Sprague-Grundy function of matroids and related hypergraphs, Theoretical Computer Science, {\bf 799}, pp.40--58 (2019).

\bibitem{BGHMM19b} Boros E., Gurvich V., Ho N. B., Makino K., and Mursic P.: On the Sprague-Grundy function of symmetric hypergraphs, Journal of Combinatorial Theory, Series A, {\bf 165}, pp. 176--186 (2019).

\bibitem{BGHMM23} Boros E., Gurvich V., Ho N. B., Makino K., and Mursic P.: Impartial games with decreasing Sprague-Grundy function and their hypergraph compound, International Journal of Game Theory, {\bf 53}, pp. 1119--1144 (2024).

\bibitem{Duf96} Dufour M.: Sur le d\'{e}composition d'un graphe complet en arbres isomorphes, Ph.D. thesis, University of Montreal (1996).

\bibitem{DH13} Dufour M.  and Heubach S.: Circular Nim games, Electron. J. Combin. {\bf 20}(2), \#P.22(2013).

\bibitem{DHV22}Dufour M., Heubach S., and Vo A.: Circular Nim games CN (7, 4), %Combinatorial Game Theory: A Special Collection in Honor of Elwyn Berlekamp, John H. Conway and Richard K. Guy, pp.139--155(2022).
Integers {\bf 21B}, The John Conway, Richard Guy, and Elwyn Berlekamp Memorial Volume, \#A9 (2021).

\bibitem{ES96}Ehrenborg R. and Steingr\'{i}msson E.: Playing Nim on a simplicial complex, Electron. J. Combin., {\bf 3}(1), \#R9(1996). 

\bibitem{Gru39}
Grundy P.~M.: Mathematics and games, Eureka,  Vol.~2, pp.\ 6--8 (1939).

\bibitem{Hor10}
Horrocks D.: Winning Positions in Simplicial Nim, Electron. J. Combin., \textbf{17}, \#R84(2010).

\bibitem{JM80}Jenkyns T.A. and Mayberry J.P.: The skeletion of an impartial game and the Nim-function of Moore's Nim$_{k}$. Int. J. Game Theory 9, 51--63 (1980). https://doi.org/10.1007/BF01784796
\bibitem{Moo10}
Moore E.~H.: A generalization of the game called nim, Annals of
  Mathematics,  Vol.~11, No.~3, pp.\ 93--94 (1910).
%\bibitem{Sie13} Siegel A. N.:
%{\it Combinatorial Game Theory},
%American Mathematical Society(2013). 
\bibitem{OS24}
Oginuma H. and Shinoda M.: Shrinking Circular Nim, Journal of Information Processing, {\bf 33}, pp. 1064--1070 (2025).
\bibitem{Pen21}
Penn N.: Computational Utilities for the Game of Simplicial Nim, Master's Thesis, University of Kentucky, 2021.

\bibitem{Sie13}%o
 Siegel A. N.:
{\it Combinatorial Game Theory},
American Mathematical Society(2013). 

\bibitem{Smi66}
Smith C. A. B.:  Graphs and composite games. J. Combin. Theory {\bf 1}(1) pp. 51-81(1966).

\bibitem{Spr35}
Sprague R.~P.: \"{U}ber mathematische Kampfspiele, {\em T\^{o}hoku Math. J.},
  Vol.~41, pp.\ 438--444 (1935-36).

\end{thebibliography}
\end{document}